\def\R{\mathbb{R}}
\def\C{\mathbb{C}}
\def\N{\mathbb{N}}
\def\Z{\mathbb{Z}}
\def\S{\mathcal{S}}
\def\two{\bar{\textbf{u}}_n}
\def\three{\dot{\textbf{u}}}
\def\torus{{\mathds{T}^d}}
\def\ltorus{{L^2(\torus;\mathbb{C})}}
\def\ltoruszero{{L^2_0(\torus;\mathbb{C})}}
\def\lsmall{\ell^2(\Z^d)}
\def\norml#1{\lVert #1\rVert_{\ltorus}}
\def\normh2s#1{\lVert #1\rVert_{H^{2s}}}
\def\frak#1{(-\Delta)^{#1}}
\def\roomhs#1{H^{#1}(\torus;\mathbb{C})}
\def\roomhszero#1{H^{#1}_0(\torus;\mathbb{C})}
\def\roomhshomo#1{\dot{H}^{#1}(\torus;\mathbb{C})}
\DeclareMathOperator{\dive}{div}
\date{}
\begin{document}
	\title{Parameter learning and fractional differential operators: application in image regularization and decomposition}
	\author{Sören Bartels \and Nico Weber}
	\institute{
		Sören Bartels \at Department of Applied Mathematics, Mathematical Institute, University of Freiburg, \email{bartels@mathematik.uni-freiburg.de \and Nico Weber \at Department of Applied Mathematics, Mathematical Institute, University of Freiburg, \email{nico.weber@mathematik.uni-freiburg.de}}
	}
	\maketitle
	\begin{abstract}
		In this paper, we focus on learning optimal parameters for PDE-based image regularization and decomposition. First we learn the regularization parameter and the differential operator for gray-scale image denoising using the fractional Laplacian in combination with a bilevel optimization problem.
		In our setting the fractional Laplacian allows the use of Fourier transform, which enables the optimization of the denoising operator. We prove stable and explainable results as an advantage in comparison to other machine learning approaches. The numerical experiments correlate with our theoretical model setting and show a reduction of computing time in contrast to the ROF model. Second we introduce a new image decomposition model with the fractional Laplacian and the Riesz potential. We provide an explicit formula for the unique solution and the numerical experiments illustrate the efficiency.
		\keywords{Variational image regularization \and Fractional Laplacian \and Bilevel optimization \and Machine learning}
	\end{abstract}
	\section{Introduction}
	In the last few years machine learning approaches have been established in image processing and computer vision. In contrast to this, variational regularization methods are used in image processing and computer vision since decades. Variational regularization techniques offer rigourous and comprehensible image analysis, which allows stable numerical results and error estimates. The certainty of giving explainable results is essential in a broad field of applications. Machine learning methods, on the other hand, are extremly powerful as they learn directly from datas for a specific task. The weak point of data-driven approaches is that they generally cannot offer stability or error bounds. In this paper we want to combine machine learning and variational regularization techniques. In particular, we learn optimal image regularizers and data fidelity parameters via a bilevel optimization approach making use of a training set.
	One of the central problems in image processing is denoising. Total variation image denoising is done with the so-called Rudin-Osher-Fatemi (ROF) model \cite{RudOshFat92}, which seeks a minimizer $u \in BV(\torus) \cap L^2(\torus)$ for
	\begin{align*}
	\hspace{1cm} E(u) = \big| Du \big|_{\torus} + \frac{\alpha }{2}\lVert g - u \rVert^2.
	\end{align*}
	The $d$-dimensional torus $\torus$ denotes the image domain, $\lVert \cdot \rVert$ is the norm in $L^2(\torus,\C)$, and $\alpha$ is a regularization parameter. The function $g: \torus \rightarrow \C$ represents the given image, which typically contains noise. A numerical difficulty of the ROF model is the non-differentiability of the total variation term. In the last years the use of differential operators, which involves fractional powers, were applied on many different kinds of problems, for example in \cite{frak1}. In image denoising as an alternative to the ROF model the total variation term is replaced by the fractional Laplacian.
	Fractional Laplacian denoising of an image is given by minimizing 
	\begin{align*}
	E(u) = \frac{1}{2}\lVert \frak{\frac{s}{2}}u \rVert^2_{\ltorus} + \frac{\alpha}{2}\lVert u-g \rVert^2_{\ltorus}.
	\end{align*}
	The application of the fractional Laplacian in image denoising has been done in \cite{AntBar17}.
	The results of this fractional model has a computing time, which  is a reduction by factors 10-100 in contrast to the well-known ROF model. Also in \cite{cguys} and \cite{disantil} the fractional Laplacian has been used in image denoising and got comparable results to the ROF model. Independent of the concrete choice of the model, the main issue in image regularization is the choice of particular regularization parameters, in our case the  two parameters $s$ and $\alpha$.  A well-known approach to compute suitable parameters is to define a bilevel optimization problem. A bilevel approach regarding a $TV_p$-image denoising model is studied in \cite{LiuSch19}. But the authors point out, that their scheme is numerically inefficient. To find the optimal regularization parameters they discretize the parameter interval and iterate over every grid point. An alternative approach using the fractional Laplacian is done in \cite{AntKha19}. There the authors learn the parameters via a so called Bilevel Optimization Neural Network. But as a disadvantage, which is typical for Neural Networks, no error estimates for the solutions are available. 
	\subsection{Contribution of this work}
	In this paper we obtain an image denoising model using the fractional Laplacian, which is on the one hand numerically fast to compute and on the other hand analytically understandable. We prove rigorous error estimates for the continous and the discrete solution.
	We consider the case of supervised learning. This means we are given noisy images $g$ and the corresponding noise-free image $u_d$. For simplicity we perform our model on a single pair $(g,u_d)$, but for multiple image pairs the results are a straightforward modification.
	The main idea of our model is to learn the optimal parameters $s$ and $\alpha$ on training data. The bilevel optimization problem is defined via
	\begin{align*}
	\hspace{1cm}&\min J(s,\alpha, u) = \frac{1}{2} \lVert u - u_d \rVert^2 + \varphi(s,\alpha)\\
	\hspace{1cm}&\text{s.t. } \frak{s} u + \alpha u = \alpha
	 g \text{ in } \torus \\
	&\text{with } (s,\alpha) \in W.
	\end{align*}
	The properties and the role of the function $\varphi$ will be discussed later.
	A theoretical analysis of this type of optimization problem in a more general setting is found in \cite{Sprekel}. 
	In the second part of the paper we apply fractional differential operators in image decomposition. 
	We consider a novel image decomposition model of the form
	\begin{align*}
	\hspace{1cm} I(u,v) &= \frac{1}{2} \lVert\frak{\frac{s_1}{2}}u\rVert^2  + \\
	 \hspace{1cm}&\frac{\alpha}{2} \lVert u + v - g\rVert^2 + \frac{\beta}{2}  \lVert R_{\frac{s_2}{2}}(v) \rVert ^2.
	\end{align*}  
	\begin{figure}
	\fboxsep=2mm
	\centering
	\begin{subfigure}{0.15\textwidth}
		\centering
		\mbox{
			\includegraphics[width=0.9\textwidth]{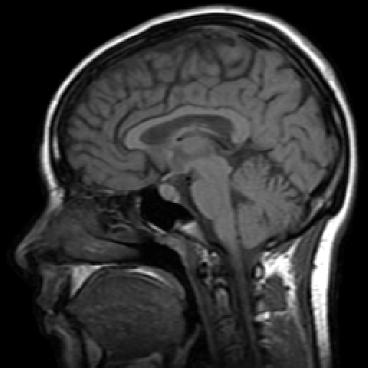}}
		\subcaption{}
	\end{subfigure}
	\begin{subfigure}{0.15\textwidth}
		\centering
		\mbox{
			\includegraphics[width=0.9\textwidth]{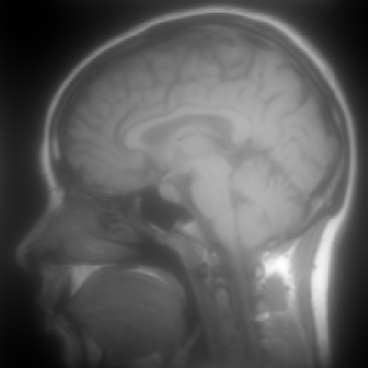}}
		\subcaption{}
	\end{subfigure}
	\begin{subfigure}{0.15\textwidth}
		\centering
		\mbox{
			\includegraphics[width=0.9\textwidth]{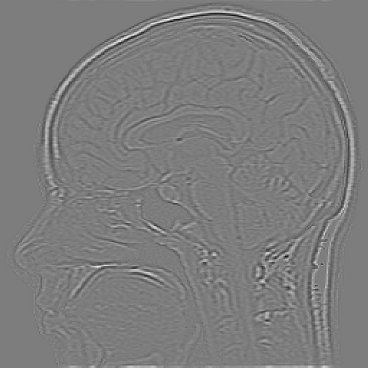}}
		\subcaption{}
	\end{subfigure}
	\caption{Decomposition of the original image (left) in structural component $u$ (middle) and textural component $v$ (right)}
	\label{decomposition}
	\end{figure}
	Typically the structural component $u$ contains the main components of the image, which is represented by the lower frequencies. The textural component $v$ contains the finer details like edges or high oscillations. These components are included in the high frequencies of an image. Therefore we introduce the Riesz potential, which captures the high frequencies, as the inverse of the fractional Laplacian. In Figure \ref{decomposition} we illustrate a decomposition into these components.
	This paper contains two main results. First we derive a rigourous error estimate for the bilevel problem in the case of spectral approximation. The numerical realization is easy to implement and in comparison to the ROF model much faster.
	Second we introduce a new image decomposition model, which has promising results in simultaneous image denoising and decomposition. To the best of our knowledge, this is the first work using fractional models in image decomposition.
	Moreover, we point out, that fractional differential operators are applicable in case of image denoising and decompositon. The use of fractional diffential operators in other image processing areas is a point of future research. 
	\subsection{Overview}
	This paper is organized as follows: In Section 2 we recall some facts about fractional Sobolev spaces and spectral approximation. The use of the fractional Laplacian in a bilevel image denoising approach is topic of Section 3. Afterwards in Section 4 a spectral approximation of this problem is introduced. In Section 5 we discuss the numerical experiments. An image decomposition approach is done in Section 6 with numerical experiments in Section 7. 
	\section{Fractional Sobolev spaces and spectral approximation}
	In this paragraph we collect some well known results concering fractional Sobolev spaces and spectral approximation. We follow \cite{AntBar17}.
	\subsection{Fractional Sobolev spaces}
	On the torus $\torus$ the Laplacian operator \\$-\Delta: \roomhszero{2} \rightarrow \ltoruszero$ is defined via
	\begin{align*}
	\hspace{1cm} - \Delta u = \frac{1}{(2\pi)^d} \sum\limits_{k \in \Z^d\setminus\{0\}}^{} |k|^{2}\hat{u}_k \varphi^k.
	\end{align*}
	with the functions $\varphi^k(x) = e^ {ik\cdot x}$ and $\hat{u}_k = \\(u,\varphi^k)_{L^2(\torus;\C)}$.
	In this setting the Laplacian operator is unbounded, non-negativ, self-adjoint and bijective.
	Therefore we can apply the theorem "Operators with compact inverses" (\cite{BoyFab13}, Theorem II-6.6). As a result we can characterize the domain of the Laplacian as
	\begin{align*}
	&D(-\Delta) = \roomhszero{2} = \\
	\hspace{1cm}&\{ u \in \ltoruszero ~\big| \sum\limits_{k \in \Z^d \setminus \{0\} }^{} |k|^4 |\hat{u}_k|^2 < \infty \}.
	\end{align*}
	Using this spectral decomposition we can define the fractional Sobolev spaces with $0\leq s \leq 1$ as 
	\begin{align*}
	&\roomhszero{s} :=\\
	\hspace{1cm}&\Big \{  u \in \ltoruszero ~\big | \sum\limits_{k \in  \Z^d \setminus \{0\}}^{} |k|^{4s} |\hat{u}_k|^2 < \infty \Big \}.
	\end{align*}
	The next theorem gives us analogous embeddings as in the case of general Sobolev spaces.
	\begin{theorem}[Rellich's theorem on $\torus$]\label{rellich}
		For all $0 \leq s < s'$ we have $\roomhszero{s'} \subset \roomhszero{s}$. \\Moreover, the canonical embedding is compact.
	\end{theorem}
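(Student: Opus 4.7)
The plan is to treat the two claims separately. The inclusion $\roomhszero{s'} \subset \roomhszero{s}$ is immediate from the spectral definition: since $|k| \geq 1$ for every $k \in \Z^d \setminus \{0\}$, one has $|k|^{4s} \leq |k|^{4s'}$, so $\sum_k |k|^{4s}|\hat u_k|^2 \leq \sum_k |k|^{4s'}|\hat u_k|^2$. This also shows that the canonical embedding is a bounded linear map of norm at most one.

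For compactness I would run the standard truncation argument in the Fourier basis. Let $(u_n)$ be a bounded sequence in $\roomhszero{s'}$. Since $\roomhszero{s'}$ is a Hilbert space, Banach--Alaoglu lets me extract a weakly convergent subsequence (still denoted $u_n$) with some limit $u \in \roomhszero{s'}$. Because for each fixed $k$ the map $v \mapsto \hat v_k$ is a continuous linear functional on $\roomhszero{s'}$, weak convergence transfers to pointwise convergence of Fourier coefficients: $\hat u_{n,k} \to \hat u_k$ for every $k$.

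To upgrade this to strong convergence in $\roomhszero{s}$ I would split the squared $H^s$-norm of $u_n - u$ at a cutoff $N \in \N$ into
\begin{align*}
\sum_{0 < |k| \leq N} |k|^{4s}|\hat u_{n,k} - \hat u_k|^2 + \sum_{|k| > N} |k|^{4s}|\hat u_{n,k} - \hat u_k|^2.
\end{align*}
On the tail I factor $|k|^{4s} = |k|^{4s'}|k|^{-4(s'-s)}$ and use $|k| > N$ to obtain the bound $N^{-4(s'-s)} \sum_{|k|>N} |k|^{4s'}|\hat u_{n,k} - \hat u_k|^2 \leq C\, N^{-4(s'-s)}$, uniformly in $n$, since weakly convergent sequences are bounded. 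The low-frequency sum is finite and converges to zero as $n \to \infty$ by pointwise Fourier convergence. A standard $\varepsilon/2 + \varepsilon/2$ argument — first pick $N$ so that the tail is below $\varepsilon/2$, then $n$ large so that the finite sum is below $\varepsilon/2$ — yields $u_n \to u$ in $\roomhszero{s}$, hence compactness.

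The main obstacle is really the tail estimate: it is precisely where the strict inequality $s' > s$ enters the proof, since without a positive gap the factor $N^{-4(s'-s)}$ would not decay. The other ingredient — transferring weak $H^{s'}$-convergence to pointwise Fourier convergence — is routine once one notes that, after rescaling by $|k|^{2s'}$, the family $\{\varphi^k\}_{k \in \Z^d \setminus \{0\}}$ provides an orthogonal basis of $\roomhszero{s'}$, so testing against a single $\varphi^k$ reproduces the corresponding coefficient up to a known weight.
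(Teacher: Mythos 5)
Your proof is correct. It differs from the paper only in that the paper does not argue at all: it simply cites Proposition II.6.8 of Boyer--Fabrie, where the compactness of the embedding is obtained from the abstract spectral theory of operators with compact inverse (the same machinery the paper uses to define $\roomhszero{s}$ in the first place). Your route is the direct, self-contained Fourier-side argument: the inclusion and boundedness of the embedding follow from $|k|\geq 1$ for $k\in\Z^d\setminus\{0\}$, and compactness follows from weak sequential compactness of bounded sets in the Hilbert space $\roomhszero{s'}$, pointwise convergence of the Fourier coefficients, and the frequency truncation in which the tail is controlled by $N^{-4(s'-s)}$ uniformly in $n$. The trade-off is the usual one: the citation buys brevity and places the result in the general framework already set up for the fractional Laplacian, while your argument is elementary, makes explicit exactly where the strict inequality $s'>s$ enters (the decay of the tail factor), and requires nothing beyond basic Hilbert space facts. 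Both are valid proofs of the stated theorem.
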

	\begin{proof}
		\cite{BoyFab13} Proposition II.6.8. \qed
	\end{proof}
	The fractional Sobolev spaces are the natural setting to generalize the Laplacian operator on the torus.
	\begin{definition}[Fractional  Laplacian]
		For $s > 0$ we define the fractional Laplacian $\frak{s}$, applied on  $u \in \roomhszero{s}$, as
		\begin{align}
		\hspace{1cm}\frak{s} u = \frac{1}{(2\pi)^d} \sum\limits_{k \in \Z^d}^{} |k|^{2s}\hat{u}_k \varphi^k. \label{frakla}
		\end{align}
	\end{definition}
	\begin{figure}
	\fboxsep=2mm
	\centering
	\begin{subfigure}{0.22\textwidth}
		\centering
		\mbox{
			\includegraphics[width=0.9\textwidth]{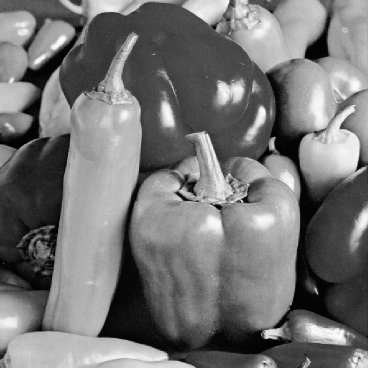}}
		\subcaption{$\frak{0} u_d = id~ u_d $}
	\end{subfigure}
	\vspace{0.2cm}
	\begin{subfigure}{0.22\textwidth}
		\centering
		\mbox{
			\includegraphics[width=0.9\textwidth]{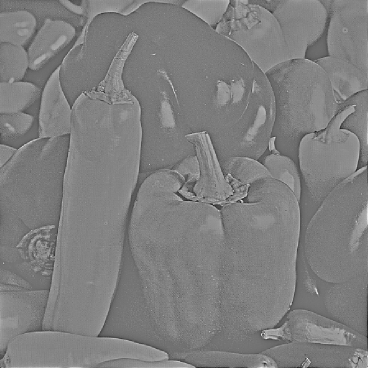}}
		\subcaption{$ \frak{\frac{1}{3}}u_d$}
	\end{subfigure}
	\begin{subfigure}{0.22\textwidth}
		\centering
		\mbox{
			\includegraphics[width=0.9\textwidth]{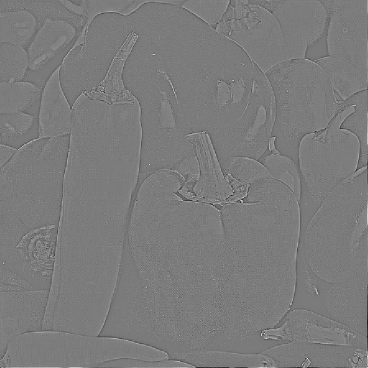}}
		\subcaption{$ \frak{\frac{2}{3}}u_d$}
	\end{subfigure}
	\begin{subfigure}{0.22\textwidth}
		\centering
		\mbox{
			\includegraphics[width=0.9\textwidth]{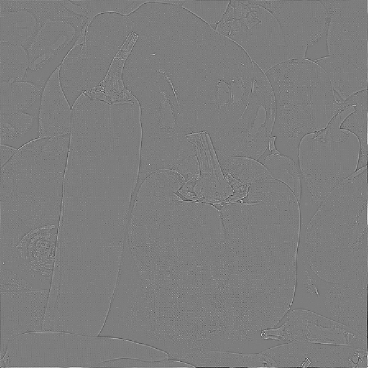}}
		\subcaption{$ \frak{1} u_d = -\Delta u_d$}
	\end{subfigure}
	\caption{Fractional Laplacian (\ref{frakla}) of image $u_d$. A higher exponent $s$ in (\ref{frakla}) emphasizes stronger the high oscillations.}
	\end{figure}
	The fractional Laplacian is a linear operator for the argument $u$, but non-linear in the parameter $s$.
	Moreover we  can define fractional Sobolev spaces for negative $s$.
	The domain $D(\frak{s})$ of the fractional Laplacian  with $s <0$ must fullfill the property of being a super set from $\ltoruszero$; so we modify the norm for negative $s$ with
	\begin{align*}
	\hspace{1cm}\lVert u \rVert^2_{D(\frak{s})} := \sum\limits_{k \in \Z^d \setminus \{0\}}^{} |k|^{4s} |\hat{u}_k|^2.
	\end{align*}
	
	\begin{definition}[Negative Sobolev spaces]\\
		Let $s < 0$. We refer $\roomhshomo{s}$ as the negative Sobolev spaces defined as the completition of $\ltoruszero$ regarding the norm $\lVert \cdot \rVert_{D(\frak{s})}$.
	\end{definition}
	It can be shown that $\roomhshomo{s}$ is a Hilbert space with the scalar product
	\begin{align*}
	\hspace{1cm}(u,v)_{\roomhshomo{s}} = \sum\limits_{k \in \Z^d \setminus \{0\}}^{} |k|^{2s} \hat{u}_k \overline{\hat{v}_k}.
	\end{align*}
	We are now in a position to define the Riesz potential, which is the negative analogue to the fractional Laplacian.
	\begin{definition}[Riesz potential]
		For $s \leq 0 $ we define the Riesz potential of a function \\$ u \in \roomhshomo{s}$ as the inverse of the fractional Laplacian with
		\begin{align}
		\hspace{1cm}R_s (u) = \frac{1}{(2\pi)^d}\sum\limits_{k \in \Z^d \setminus \{0\}}^{} |k|^{2s} \hat{u}_k \varphi^k . \label{riesz}
		\end{align}
	\end{definition}
	\begin{figure}
		\fboxsep=2mm
		\centering
		\begin{subfigure}{0.22\textwidth}
			\centering
			\mbox{
				\includegraphics[width=0.9\textwidth]{comp1.png}}
			\subcaption{original image $u_d$}
		\end{subfigure}
		\vspace{0.2cm}
		\begin{subfigure}{0.22\textwidth}
			\centering
			\mbox{
				\includegraphics[width=0.9\textwidth]{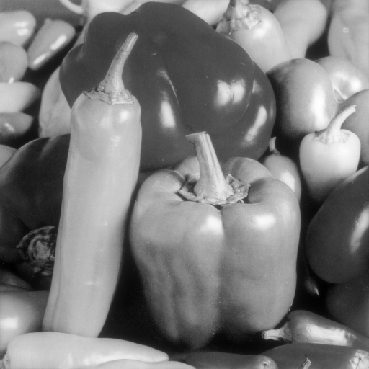}}
			\subcaption{$R_{-0.1}(u_d)$}
		\end{subfigure}
		\begin{subfigure}{0.22\textwidth}
			\centering
			\mbox{
				\includegraphics[width=0.9\textwidth]{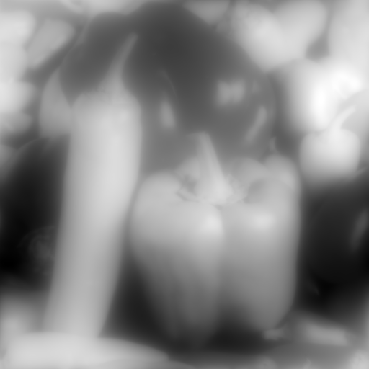}}
			\subcaption{$R_{-0.5}(u_d)$}
		\end{subfigure}
		\begin{subfigure}{0.22\textwidth}
			\centering
			\mbox{
				\includegraphics[width=0.9\textwidth]{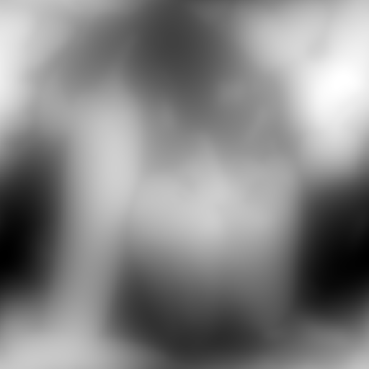}}
			\subcaption{$R_{-0.9}(u_d)$}
		\end{subfigure}
		\caption{Riesz potential (\ref{riesz}) of image $u_d$. A lower exponent $s$ in (\ref{riesz}) smooths the image $u_d$.}
	\end{figure}
	\subsection{Spectral approximation}
	For short summaries about spectral approximation, we refer to \cite{AntBar17} and \cite{spec}.
	The space of trigonometric polynomials  on the torus $\torus$ is defined via 
		\begin{align*}
		  \hspace{1cm}\mathcal{T}_n &= 
		  \Big\{v_n \in C(\torus,\C)~\\
		  \hspace{1cm}&\big |~ v_n(x) =  \sum\limits_{k \in \Z^d_n}^{} c_k\varphi^k(x), c_k \in  \C\Big\},
		\end{align*}
		with $\Z^d_n := \{ k \in \Z^d ~~\big| \frac{-n}{2} \leq k_i \leq \frac{n}{2}-1 ~\forall i\}$.
	The functions $\varphi^k(x) = e^{ik\cdot x}$ define an orthogonal basis in $\mathcal{T}_n$ regarding the $L^2$-scalar product.\\
	With  $v$  we associate a  grid function $\textbf{V} = (v_j | ~j \in \N^d_n)$, which is represented by 
	\begin{align*}
	&\hspace{1cm}v_j = v(x_j) \text{ , } x_j = \frac{2\pi}{n} (j_1,...,j_d),\\& \hspace{1cm} j \in \N^d_n = \{~ j \in \N^d ~|~ 0 \leq j_i \leq n-1~\forall j\}.
	\end{align*}
	
	The discrete scalar product of two grid functions $\textbf{V} = (v_j ~|~ j \in \N^d_n)$ and $\textbf{W} = (w_j ~|~ j \in \N^d_n)$  is given by
		\begin{align*}
		\hspace{1cm}(\textbf{V},\textbf{W})_n := \frac{(2\pi)^d}{n^d} \sum_{j \in \N^d_n}^{} v_j \bar{w}_j.
		\end{align*}
	The induced norm is denoted by $\Vert \cdot \Vert_n$.

	\begin{definition}[Fourier transform]
		For a given grid function $ \textbf{V} = ( v_j ~|~ j \in \N^d_n)$  the discrete Fourier transform is defined as the coefficient vector $\tilde{\textbf{V}} = (\tilde{v}_k |~ k \in \Z^d_n)$ with
		\begin{align*}
		\hspace{1cm}\tilde{v}_k = (V,\Phi^k)_n.
		\end{align*}
		The family 
		\begin{align*}
		\hspace{1cm}\Phi^k = ( e^{ik\cdot x_j} ~|~ j \in \N^d_n) ~,~ x_j = \frac{2\pi}{n}(j_1,...,j_d)
		\end{align*}
		defines an orthogonal basis in the space of grid functions regarding  $(\cdot,\cdot)_n$.
	\end{definition}
	To approximate a function in the fractional \\Sobolev space $\roomhszero{s}$, we consider suitable approximation in the trigonometric space $\mathcal{T}_n$.
	\begin{definition}[Orthogonal projection]\\
		The projection $P_n: \ltorus \rightarrow \mathcal{T}_n $ fulfills for all $v  \in \ltorus$ the property
		\begin{align*}
		\hspace{1cm}(P_n v, w_n) = (v, w_n)~~~ \forall w_n \in \mathcal{T}_n.
		\end{align*}
	\end{definition}
	Using the orthogonality of the basis functions we get
	\begin{align*}
	\hspace{1cm}P_n v = \sum\limits_{ k \in \Z^d_n}^{} \hat{v}_k \varphi^k.
	\end{align*} 
	
	The discrete Fourier transformation allows us to define a  suitable trigonometric interpolation.
	\begin{definition}[Trigonometric interpolant]
		Given $v \in C(\torus;\C)$ and discrete Fourier coefficients $\tilde{\textbf{V}} = (\tilde{v}_k ~|~ k \in \Z^d_n)$, the trigonometric interpolant $I_n v \in \mathcal{T}_n$ of $v$ is defined via
		\begin{align*}
		 \hspace{1cm}I_n v = \frac{1}{(2\pi)^d} \sum_{k \in \Z^d_n}^{} \tilde{v}_k \varphi^k.
		\end{align*}
	\end{definition}
	The following error estimate can be found in \cite{AntBar17}. 
	The norm $\lVert \cdot \rVert_{D(\frak{s})}$ is denoted by $|\cdot|_s$.
	\begin{lemma}[Projection error]\label{proj}
		For $\gamma_1,\gamma_2 \in \R$ with $\gamma_1 \leq \gamma_2$ and $v \in \roomhszero{\gamma_2}$ we obtain
		\begin{align*}
		\hspace{1cm}| v - P_n v|_{\gamma_1} \leq \Big(\frac{n}{2}\Big)^{-(\gamma_2-\gamma_1)}|v|_{\gamma_2}.
		\end{align*}
	\end{lemma}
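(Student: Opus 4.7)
The plan is to reduce the estimate to a direct Parseval-type computation in the Fourier basis. Since $P_n v = \sum_{k \in \Z^d_n} \hat{v}_k \varphi^k$ agrees with $v$ mode-by-mode inside the index set $\Z^d_n$, the error $v - P_n v$ is simply the Fourier tail
\begin{align*}
v - P_n v \;=\; \sum_{k \in \Z^d \setminus \Z^d_n} \hat{v}_k\, \varphi^k.
\end{align*}
Plugging this into the definition of $|\cdot|_{\gamma_1}$ (which is a weighted $\ell^2$-sum of the $\hat{v}_k$ against a power of $|k|$) expresses $|v - P_n v|_{\gamma_1}^2$ as the same weighted sum restricted to $k \notin \Z^d_n$. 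First I would carry out this reduction explicitly so that the entire argument lives in the coefficient space $\lsmall$.

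Next comes the key geometric observation: a multi-index $k$ fails to lie in $\Z^d_n = \{k : -n/2 \leq k_i \leq n/2 - 1\}$ precisely when some coordinate satisfies $|k_i| \geq n/2$, and therefore $|k| \geq n/2$ for every $k$ in the tail. Using the hypothesis $\gamma_2 \geq \gamma_1$, I would then multiply and divide each summand by the appropriate positive power of $|k|^{\gamma_2 - \gamma_1}$ and bound that factor uniformly by the corresponding power of $(n/2)^{-(\gamma_2 - \gamma_1)}$. This converts a $\gamma_1$-weighted tail into a $\gamma_2$-weighted tail, multiplied by a constant independent of $v$.

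Finally I would pull the resulting constant outside the sum and enlarge the index set from $\Z^d \setminus \Z^d_n$ back to all of $\Z^d \setminus \{0\}$ (which only increases the sum, by positivity of each summand). The remaining weighted sum is exactly $|v|_{\gamma_2}^2$, and taking square roots yields the claimed inequality. The entire argument is essentially bookkeeping: there is no convergence subtlety since $v \in \roomhszero{\gamma_2}$ guarantees absolute convergence of every sum involved, and no approximation step is needed beyond truncation. The only point demanding care is the geometric claim about $\Z^d \setminus \Z^d_n$, which I expect to be the main "obstacle" — it is the sole place where the specific choice of the lattice $\Z^d_n$ enters and determines the exponent $-(\gamma_2 - \gamma_1)$ on $n/2$.
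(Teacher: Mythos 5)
Your argument is correct and is exactly the standard Fourier-tail/weight-shifting proof that the paper itself does not reproduce but simply cites from the reference for this lemma: since $P_n v$ keeps precisely the modes $k \in \Z^d_n$, the error is the tail sum, every tail index satisfies $|k| \geq n/2$ (note only this implication, not the "precisely when", is needed, since $k_i = -n/2$ is still admissible in $\Z^d_n$), and the weight comparison with $\gamma_1 \leq \gamma_2$ gives the stated factor $(n/2)^{-(\gamma_2-\gamma_1)}$. The only point to tidy up in a written version is matching the power of $|k|$ to the paper's definition of $|\cdot|_{\gamma}$ (which uses $|k|^{4\gamma}$ in the squared norm), but with either convention the claimed inequality follows from your estimate.
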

	
	\begin{lemma}[Interpolation error]\label{inter}
		If $\gamma_2 > \frac{d}{2}, 0 \leq \gamma_1 \leq \gamma_2$ and $v \in \roomhszero{\gamma_2}$ we obtain 
		\begin{equation*}
		\hspace{1cm}|v - I_n v |_{\gamma_1} \leq c_{d,\gamma_1,\gamma_2}\Big(\frac{n}{2}\Big)^{-(\gamma_2-\gamma_1)} |v|_{\gamma_2}
		\end{equation*}
		with a constant $c_{d,\gamma_1,\gamma_2} > 0$ independent of $v$ and $n$.
	\end{lemma}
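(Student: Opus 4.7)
The plan is to split $v - I_n v = (v - P_n v) + (P_n v - I_n v)$ and invoke the triangle inequality. The first summand is handled immediately by Lemma \ref{proj} with exponents $\gamma_1$ and $\gamma_2$, yielding exactly the advertised rate $(n/2)^{-(\gamma_2-\gamma_1)}|v|_{\gamma_2}$. The task therefore reduces to controlling the aliasing contribution $P_n v - I_n v$ in $|\cdot|_{\gamma_1}$ by the same expression.

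For the aliasing term I would start from the standard identity
\[
\tilde v_k = c_d \sum_{m \in \Z^d} \hat v_{k+nm}, \qquad k \in \Z^d_n,
\]
(with $c_d$ a fixed normalization constant) obtained by substituting the continuous Fourier series of $v$ into the definition of $\tilde v_k$ and evaluating the basis functions at the grid points $x_j=(2\pi/n)j$. The hypothesis $\gamma_2 > d/2$ enters precisely here: via the Sobolev embedding $\roomhszero{\gamma_2}\hookrightarrow C(\torus;\C)$ the series converges absolutely and pointwise evaluation of $v$ at $x_j$ is justified. Comparing with the projection formula $P_n v = \sum_{k\in \Z^d_n}\hat v_k\varphi^k$ shows that the Fourier coefficient of $I_n v - P_n v$ at frequency $k \in \Z^d_n$ is, up to a constant, $\sum_{m\neq 0}\hat v_{k+nm}$, and therefore
\[
|I_n v - P_n v|_{\gamma_1}^2 \leq C \sum_{k \in \Z^d_n} |k|^{2\gamma_1} \Big|\sum_{m \neq 0} \hat v_{k+nm}\Big|^2.
\]

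The central estimate is now a weighted Cauchy--Schwarz in $m$,
\[
\Big|\sum_{m\neq 0}\hat v_{k+nm}\Big|^2 \leq \Big(\sum_{m\neq 0}|k+nm|^{-2\gamma_2}\Big)\Big(\sum_{m\neq 0}|k+nm|^{2\gamma_2}|\hat v_{k+nm}|^2\Big),
\]
combined with the elementary inequalities $|k+nm| \geq (n/2)|m|$ for $k \in \Z^d_n$ and $m\neq 0$ (using $|k_i|\leq n/2$ and $|m_i|\geq 1$ on at least one coordinate) and $|k|\leq\sqrt d\,(n/2)$. These yield the pointwise bound
\[
|k|^{2\gamma_1}|k+nm|^{-2\gamma_2} \leq d^{\gamma_1}(n/2)^{-2(\gamma_2-\gamma_1)}|m|^{-2\gamma_2},
\]
and the sum $\sum_{m\neq 0}|m|^{-2\gamma_2}$ converges exactly under the assumption $\gamma_2>d/2$. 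After summing over $k \in \Z^d_n$ and reindexing by the bijection $(k,m)\mapsto k+nm$ from $\Z^d_n\times\Z^d$ onto $\Z^d$, the remaining factor collapses to a constant multiple of $|v|_{\gamma_2}^2$, which is the desired bound.

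The main obstacle is the weighted Cauchy--Schwarz step: the weight $|k+nm|^{\gamma_2}$ has to be matched carefully against $|k|^{\gamma_1}$ so as to simultaneously extract the decay $(n/2)^{-(\gamma_2-\gamma_1)}$, use up precisely the summability threshold $\gamma_2>d/2$, and leave behind a factor that reassembles into $|v|_{\gamma_2}^2$ once the order of summation is exchanged. The triangle inequality, the aliasing identity, and the invocation of Lemma \ref{proj} are essentially bookkeeping once this estimate has been set up.
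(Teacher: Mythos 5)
The paper gives no proof of this lemma at all---it is quoted from \cite{AntBar17}---and your argument is exactly the standard aliasing proof behind that reference: split off $v-P_nv$ via Lemma \ref{proj}, use the aliasing identity $\tilde v_k=c_d\sum_{m}\hat v_{k+nm}$ (legitimate since $\gamma_2>\tfrac d2$ gives absolute convergence of the Fourier series and continuity), then weighted Cauchy--Schwarz together with $|k+nm|\ge \tfrac n2|m|$ for $k\in\Z^d_n$, $m\neq 0$, the bound $|k|\le\sqrt d\,\tfrac n2$, and the convergence of $\sum_{m\neq0}|m|^{-2\gamma_2}$, followed by reindexing over $k+nm$. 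This is correct and complete in outline; the only caveat is notational, namely the paper's own inconsistent weighting of the seminorm $|\cdot|_s$ (the $|k|^{4s}$ versus $|k|^{2s}$ conventions), which does not affect your argument.
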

	\section{Application of fractional Laplacian in bilevel image optimization}
	As mentioned in the introduction for a given noisy image $g \in \ltorus$ we want to solve 
	\begin{align}
	\hspace{0.5cm} \min ~E(u) = \frac{1}{2}\lVert \frak{\frac{s}{2}}u \rVert^2 + \frac{\alpha}{2}\lVert u-g \rVert^2 \label{funcbartels}
	\end{align} 
	with $ 0 < s < 1$ and $u \in \roomhszero{s} \cap \ltoruszero$. The main idea of this model is to supress the high frequencies, which typically contain noise. In the following we always assume, that the mean value of $g$ is zero. Otherwise, we replace $g$ with
	\begin{align*}
	\hspace{1cm} \tilde{g}(x) := g(x) - \frac{1}{\big|\torus\big| } \int_{\torus}^{} g(x)~dx.
	\end{align*}
	and add the mean value to the solution. 
	As a result we can minimize in the function space $\roomhszero{s}$. Existence and uniqueness of the solution $u$ considered in the frequency space yields 
	\begin{equation}\label{u}
	\hspace{1cm} u = \frac{1}{(2\pi)^d} \sum\limits_{k \in \Z^d \setminus \{0\}}^{} \frac{\alpha}{|k|^{2s}+\alpha}\,\hat{g}_k\varphi^k.
	\end{equation}
	\begin{figure}
	\fboxsep=2mm
	\centering
	\begin{subfigure}{0.22\textwidth}
	\centering
	\mbox{
		\includegraphics[width=0.9\textwidth]{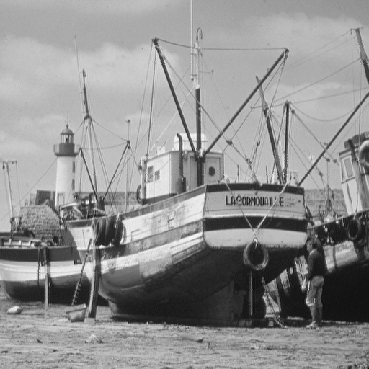}}
	\subcaption{original image $u_d$}
	\end{subfigure}
	\vspace{0.2cm}
	\begin{subfigure}{0.22\textwidth}
	\centering
	\mbox{
		\includegraphics[width=0.9\textwidth]{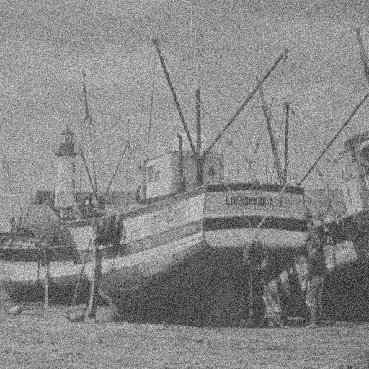}}
	\subcaption{noisy image $g$}
	\end{subfigure}
	\begin{subfigure}{0.22\textwidth}
	\centering
	\mbox{
		\includegraphics[width=0.9\textwidth]{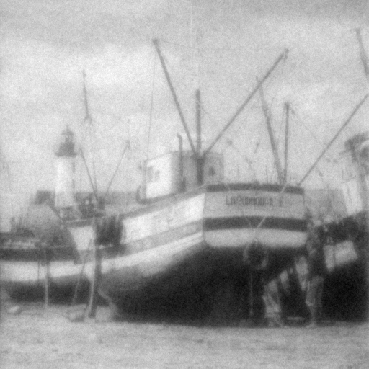}}
	\subcaption{$s=0.4,~\alpha=5$}
	\end{subfigure}
	\begin{subfigure}{0.22\textwidth}
	\centering
	\mbox{
		\includegraphics[width=0.9\textwidth]{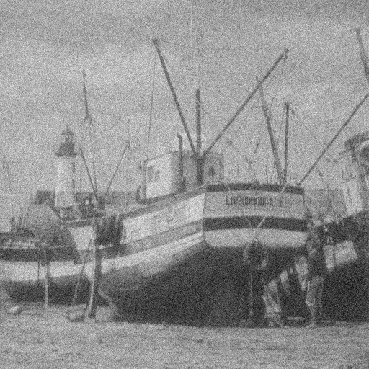}}
	\subcaption{$s=0.1,~\alpha=1$}
	\end{subfigure}
	\caption{The denoising via solving (\ref{funcbartels}) strongly depend on the choice of the parameters $s$ and $\alpha$.}
	\label{influence}
	\end{figure}
	A key point in image denoising is the choice of the parameters, in this case $s$ and $\alpha$. Normally this is done manually, which is time-consuming. As seen in Figure \ref{influence} the results of denoising an image highly depends on the right choice of the regularization parameters. To overcome this problem we use a bilevel optimization approach. We assume, that we have the original image $u_d \in \ltorus$ and the noisy image $g$. So we define the optimization problem 
	\begin{align}
	\min_{s,\alpha,u}&~J(s,\alpha,u) = \frac{1}{2} \norml{u -u_d} ^2 + \varphi(\alpha,s)\nonumber\\
	&\text{s.t. } \frak{s}u + \alpha u = \alpha g \label{func1}\\
	&\text{with }(s, \alpha) \in W := [s_0,s_1]  \times [\alpha_0,\alpha_1]\nonumber.
	\end{align}
	The function $\varphi$ must fulfill following assumptions.
	\begin{definition}
		The function $\varphi \in C^2(W^{\mathrm{o}},\R)$ is non negative, convex and has following properties:
		\begin{align}
		&\hspace{1cm}\lim\limits_{s \rightarrow s_0} \varphi(s,\alpha) = \lim\limits_{\alpha \rightarrow \alpha_0} \varphi(s,\alpha) = \infty \nonumber \\
		&\hspace{1cm} = \lim\limits_{s \rightarrow s_1} \varphi(s,\alpha) = \lim\limits_{\alpha \rightarrow \alpha_1} \varphi(s,\alpha). \label{steil}
		\end{align} 
	\end{definition}
	Parsevals's identity and the isometry properties between $\ltorus$ and $\lsmall$ imply the equivalent representation of (\ref{func1}) with 
	\begin{align}
	J(s,\alpha,u) = \frac{1}{2(2\pi)^d} \lVert \widehat{\textbf{U}} - \widehat{\textbf{U}}_d \rVert_{l^2}^2   + \varphi(\alpha,s) \label{funfourier1}\\
	\text{s.t. }((\alpha+ |k|^{2s})\hat{u}_k - \alpha \hat{g_k})_{k \in \mathbb{Z}^d\setminus\{0\}}  = 0 \label{funfourier2}\\
	\text{with }(s, \alpha) \in W := [s_0,s_1]  \times [\alpha_0,\alpha_1] \nonumber.
	\end{align}
	The expressions $\widehat{\textbf{U}}$ and $\widehat{\textbf{U}}_d$ denote the Fourier coefficient vector of $u$ and $u_d$.
	Solving (\ref{funfourier2}) according to $\hat{u}_k$ yields a solution operator $\mathcal{S}$ defined via
	\begin{align}
	\mathcal{S}: [s_0,s_1]\times[\alpha_0,\alpha_1] \rightarrow \lsmall \nonumber\\
	(s,\alpha) \mapsto \mathcal{S}(s,\alpha) = \widehat{\textbf{U}} =  \big(\frac{\alpha}{\alpha + |k|^{2s}} \hat{g_k}\big)_{k \in \mathbb{Z}^d}. \label{operator2}
	\end{align}
	To analyze the optimization problem (\ref{funfourier1})-(\ref{funfourier2}), we need to characterize the partial derivatives of the solution operator $\mathcal{S}$.
	\begin{theorem}\label{diffbar}
		The operator $\mathcal{S}$  as an operator from $ W \rightarrow \ltorus \simeq \lsmall$ is twice Fr\'{e}chet-differentiable  and in particular continous.
		For $h_1, h_2 \in \R$  the partial derivatives up to order two are represented by
		\begin{align*}
		&\hspace{1cm}\partial_s \mathcal{S}(s,\alpha)[h_1] = h_1\partial_s\mathcal{S}(s,\alpha),\\
		&\hspace{1cm}\partial^2_{s} \mathcal{S}(s,\alpha)[h_1,h_2] = h_1h_2 \partial_{s^2} \mathcal{S}(s,\alpha),\\
		&\hspace{1cm}\partial_\alpha \mathcal{S}(s,\alpha)[h_1] = h_1\partial_\alpha\mathcal{S}(s,\alpha),\\
		&\hspace{1cm}\partial^2_{\alpha} \mathcal{S}(s,\alpha)[h_1,h_2] = h_1h_2 \partial_{\alpha^2} \mathcal{S}(s,\alpha),\\
		&\hspace{1cm}\partial_{\alpha, s } \mathcal{S}(s,\alpha)[h_1,h_2] = h_1h_2 \partial_{\alpha,s} \mathcal{S}(s,\alpha),
		\end{align*}
		with 
		\begin{align*}
		&\partial_s \mathcal{S}(s,\alpha) = \sum\limits_{k \in \Z^d \setminus \{0\}}^{} 	 \frac{2\alpha|k|^{2s}ln(|k|)}{(|k|^{2s} + \alpha)^2} \hat{g}_k,\\
		&\partial^2_{s} \mathcal{S}(s,\alpha) = \sum\limits_{k \in \Z^d \setminus \{ 0\}}^{} \frac{4\alpha|k|^{2s}ln(|k|)^2(|k|^{2s}-\alpha)}{(\alpha + |k|^{2s})^3} \hat{g}_k, \\
		&\partial_\alpha \mathcal{S}(s,\alpha) = \sum\limits_{k \in \Z^d \setminus \{0\}}^{} \frac{|k|^{2s}}{(|k|^{2s}+ \alpha)^2}\hat{g}_k,\\ &\partial^2_{\alpha} \mathcal{S}(s,\alpha) = \sum\limits_{k \in \Z^d \setminus \{0\}}^{}  \frac{-2|k|^{2s}}{(|k|^{2s} + \alpha )^3} \hat{g}_k,\\
		&\partial_{\alpha, s} \mathcal{S}(s,\alpha) = \sum\limits_{k \in \Z^d \setminus \{0\}}^{} \!
		\frac{-2|k|^{2s}ln(|k|)(|k|^{2s} - \alpha)}{(|k|^{2s} + \alpha)^3}\hat{g}_k.
		\end{align*}
	\end{theorem}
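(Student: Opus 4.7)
The plan is to exploit the fact that $\mathcal{S}$ is a Fourier multiplier operator acting componentwise in frequency space. Fix $(s,\alpha) \in W^{\mathrm{o}}$ and write the $k$-th component as $\mathcal{S}_k(s,\alpha) = m_k(s,\alpha)\,\hat g_k$ with symbol
\begin{equation*}
m_k(s,\alpha) = \frac{\alpha}{\alpha + |k|^{2s}}, \qquad k \in \Z^d\setminus\{0\}.
\end{equation*}
Because $\alpha \geq \alpha_0 > 0$ the denominator is bounded away from zero, so $m_k \in C^{\infty}(W^{\mathrm{o}},\R)$ for every fixed $k$. First, I would compute $\partial_s m_k$, $\partial_\alpha m_k$ and the three second partials by the quotient rule applied pointwise in $k$; this produces exactly the symbol expressions stated in the theorem and identifies the claimed operators as the natural candidates for the partial Fr\'{e}chet derivatives.

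The next step is to verify that these candidates define bounded (multi)linear maps into $\lsmall$. Since $\hat g \in \lsmall$, it is enough to show that each symbol-derivative sequence is uniformly bounded in $k$ on $W$. For the non-logarithmic symbols, e.g.\ $\partial_\alpha m_k = |k|^{2s}/(|k|^{2s}+\alpha)^2$, AM--GM gives $(|k|^{2s}+\alpha)^2 \geq 4\alpha|k|^{2s}$, so this symbol is bounded by $1/(4\alpha_0)$, and $\partial_{\alpha^2} m_k$ is estimated analogously. The logarithmic symbols, appearing in $\partial_s m_k$, $\partial_{s^2} m_k$ and $\partial_{\alpha s}m_k$, require a little care: at $|k|=1$ the factor $\ln|k|$ vanishes, and for $|k|\geq 2$ the quantities $\ln|k|/|k|^{2s_0}$ and $\ln^2|k|/|k|^{2s_0}$ are bounded (single-variable calculus). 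Combined with the obvious uniform control of the remaining rational pieces, this yields uniform-in-$k$ bounds on $W$ and shows that each displayed expression maps $(h_1,h_2) \in \R^2$ linearly into $\lsmall$ with operator norm controlled by $\|g\|_{\lsmall}$.

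With the candidates and their continuity in hand, Fr\'{e}chet differentiability follows componentwise by Taylor's theorem. For each $k$ I would write
\begin{equation*}
m_k(s+h_1,\alpha+h_2) - m_k(s,\alpha) - \partial_s m_k\, h_1 - \partial_\alpha m_k\, h_2 = R_k(h_1,h_2),
\end{equation*}
with $|R_k(h_1,h_2)| \leq \tfrac{1}{2} M_2 (h_1^2 + h_2^2)$, where $M_2$ is the uniform bound on the second partials of $m_k$ over $W$ obtained in the previous step. Multiplying by $\hat g_k$, squaring and summing over $k$ gives
\begin{equation*}
\bigl\|\mathcal{S}(s+h_1,\alpha+h_2) - \mathcal{S}(s,\alpha) - \partial_s\mathcal{S}(s,\alpha) h_1 - \partial_\alpha\mathcal{S}(s,\alpha) h_2\bigr\|_{\lsmall} \leq \tfrac{1}{2} M_2 \|g\|_{\lsmall} (h_1^2 + h_2^2),
\end{equation*}
which is precisely the defining estimate for Fr\'{e}chet differentiability and identifies the derivative with the componentwise candidate. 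The same argument one order higher, now invoking uniform-in-$k$ bounds on the third-order symbol derivatives (derived exactly as above), produces the second Fr\'{e}chet derivative with the claimed formulas; continuity of $\mathcal{S}$ is then immediate.

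The main obstacle, and the only nontrivial point, is the uniform-in-$k$ control of the logarithmic symbols. Once one observes that the denominators $(|k|^{2s}+\alpha)^p$ produce decay $|k|^{-2sp}$ at large $|k|$ which dominates any power of $\ln|k|$, while the lowest shells ($|k|=1$) contribute $\ln|k|=0$, the construction of the uniform bounds reduces to comparing fixed rational functions on $W$. Everything else is routine bookkeeping of the Taylor-with-remainder estimate at the level of $\lsmall$.
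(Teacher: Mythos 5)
Your proposal is correct and takes essentially the same route as the paper: the paper's proof likewise reduces everything to the componentwise multiplier $E_k(s,\alpha)=\frac{\alpha}{\alpha+|k|^{2s}}$ (your $m_k$) and defers the quotient-rule computations and the Taylor-with-uniform-remainder verification to the cited reference, which is exactly what you carry out explicitly. Your uniform-in-$k$ control of the logarithmic factors also mirrors the device used in Lemma~\ref{bepartial} (absorbing $\ln|k|$ into a small power of $|k|$), under the same implicit standing assumptions $s_0>0$ and $\alpha_0>0$.
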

	\begin{proof}
	The proof requires straightforward calculations, but follows mainly the proof structure of Section 3.1 in \cite{AntOtaSal18}. A more general case is also found in \cite{Sprekel}. In our case we focus on the function 
	$E_k: [s_0,s_1]\times[\alpha_0,\alpha_1] \rightarrow \R$ for $k \in \Z^d$ with 
	\begin{align*}
	~~~~~~E_k(s,\alpha) := \frac{\alpha}{\alpha + |k|^{2s}}.
	\end{align*}
	The remaining part of the proof is a straightforward modification of the cited paper.\qed
	\end{proof}
	Using the explicit representation of the partial derivatives we obtain upper bounds for these.
	 \begin{lemma}[Boundedness of partial \\derivatives]\label{bepartial}
		Let $\varepsilon > 0$ and $i=1,2,3$. Then we have the following estimates for the partial derivatives: 
		\begin{align*}
		&\hspace{1cm}\Vert \partial^i_\alpha \mathcal{S}(s,\alpha)\Vert_{\ell^2} \leq \sum\limits_{k \in \mathbb{Z}^d\setminus\{0\}}^{} \frac{i!}{|k|^{2si}} |\hat{g_k}|,\\
		&\hspace{1cm}\Vert \partial^i_s \mathcal{S}(s,\alpha)\Vert_{\ell^2} \leq \sum\limits_{k \in \mathbb{Z}^d\setminus\{0\}}^{} \frac{M_{\alpha_1,i}}{\varepsilon^i |k|^{2s- i\varepsilon}} |\hat{g}_k|,	 \\
		&\hspace{1cm}\Vert \partial_{s,\alpha} \mathcal{S}(s,\alpha)\Vert_{\ell^2} \leq \sum\limits_{k \in \mathbb{Z}^d\setminus\{0\}}^{} \frac{M_{\alpha_1}}{\varepsilon |k|^{2s- \varepsilon}} |\hat{g}_k|	.	
		\end{align*}		
	\end{lemma}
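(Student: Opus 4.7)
The plan is to bound each estimate by working componentwise in Fourier space and then invoking the elementary inequality $\|v\|_{\ell^2} \leq \|v\|_{\ell^1}$ for a sequence $v$ whose $\ell^1$-norm is finite. Since Theorem~\ref{diffbar} expresses every partial derivative of $\mathcal{S}$ as a Fourier series with explicit multipliers of $\hat{g}_k$, it suffices to bound each multiplier from above and then sum the resulting scalars times $|\hat{g}_k|$. This reduces everything to three elementary ingredients: (i) the denominator bound $(|k|^{2s}+\alpha)^n \geq |k|^{2sn}$ valid for $\alpha>0$, (ii) the triangle estimate $\bigl||k|^{2s}-\alpha\bigr| \leq |k|^{2s}+\alpha$ used in the mixed derivative and in $\partial_s^2\mathcal{S}$, and (iii) the logarithm bound $\ln|k| \leq \varepsilon^{-1}|k|^\varepsilon$ valid for $|k|\geq 1$ and $\varepsilon>0$, which absorbs the logarithmic factors at the cost of a small polynomial loss.

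For the $\alpha$-derivatives I would simply plug the explicit formulas from Theorem~\ref{diffbar} into (i): each $k$-th multiplier is of the form $c_i\,|k|^{2s}/(|k|^{2s}+\alpha)^{i+1}$ with $c_i \in \{1,2,6,\dots\} = \{i!\}$, and dropping the $\alpha$ in the denominator yields the bound $i!/|k|^{2si}$ term by term. The case $i=3$ is not written in the theorem but follows from one further differentiation of $E_k(s,\alpha)=\alpha/(\alpha+|k|^{2s})$, which produces the factor $3!=6$; the same structural argument applies.

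For the $s$-derivatives and the mixed derivative I would proceed analogously, first using (ii) to cancel the problematic factor $\bigl||k|^{2s}-\alpha\bigr|$ against one power of $(|k|^{2s}+\alpha)$, then using (i) on the remaining powers in the denominator, and finally using (iii) to control the powers of $\ln|k|$. For $\partial_s\mathcal{S}$ the estimate $(|k|^{2s}+\alpha)^2\geq |k|^{4s}$ gives the multiplier bound $2\alpha \ln|k|/|k|^{2s} \leq 2\alpha_1 \varepsilon^{-1}|k|^{\varepsilon-2s}$; iterating this on the formula for $\partial_s^2\mathcal{S}$ produces $|k|^{2\varepsilon-2s}$ with the constant $M_{\alpha_1,2}$ coming from the factor $4\alpha$ and from squaring $\varepsilon^{-1}$. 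The mixed derivative works the same way, after applying (ii) once to remove the $(|k|^{2s}-\alpha)$ factor.

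The only real issue is the handling of the logarithm: one must verify that $\ln|k|\leq \varepsilon^{-1}|k|^\varepsilon$ for all $k\in\Z^d\setminus\{0\}$ and make clear how the constant $\varepsilon$ propagates through each application (appearing to the $i$-th power for $\partial_s^i\mathcal{S}$ and to the first power for $\partial_{s,\alpha}\mathcal{S}$). The remaining computations are arithmetic comparisons between polynomials and are straightforward once the three key inequalities are in place; I would therefore present the proof as a short list of termwise estimates rather than a long calculation, noting that the sums on the right-hand sides are well-defined whenever $g$ is sufficiently regular to make the corresponding $\ell^1$-series over the shifted weights converge.
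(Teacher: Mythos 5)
Your proposal is correct and matches the paper's approach: the paper's proof of this lemma is simply ``use Theorem~\ref{diffbar} and straightforward calculations,'' and your termwise bounds $(|k|^{2s}+\alpha)^n \geq |k|^{2sn}$, $\bigl||k|^{2s}-\alpha\bigr|\leq |k|^{2s}+\alpha$, and $\ln|k|\leq \varepsilon^{-1}|k|^{\varepsilon}$ combined with $\lVert\cdot\rVert_{\ell^2}\leq\lVert\cdot\rVert_{\ell^1}$ are exactly those calculations, including the correct handling of the third derivatives via one further differentiation of $E_k$.
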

	\begin{proof}
		We obtain these results by using Theorem 2 and straightforward calculations.\qed
	\end{proof}
	If $g \in \ltorus$ and $\varepsilon > 0$, we deduce from Lemma ~\ref{bepartial}
	\begin{align*}
	&\hspace{1cm}\partial^i_s \S(s,\alpha) \in \roomhszero{2s - i\varepsilon}, \\ &\hspace{1cm}\partial^i_\alpha \S(s,\alpha) \in \roomhszero{2si}
	\end{align*}
	for $i=1,2,3$. \\
	The introduction of the solution operator  $\mathcal{S}$ allows us to consider a reduced version of (\ref{funfourier1})-(\ref{funfourier2}). The reduced problem  $j: \R^2 \mapsto \R$ of (\ref{funfourier1}) and (\ref{funfourier2}) is given by minimizing
	\begin{align}
	\hspace{0.5cm} j(s,\alpha) &:=  J(s,\alpha, \mathcal{S}(s,\alpha)) \label{funcred}\\
	&= \frac{1}{2(2\pi)^d} \lVert \mathcal{S}(s,\alpha) - \widehat{\textbf{U}}_d\rVert_{\ell^2}^2 + \varphi(s,\alpha) \nonumber\\ 
	&\text{with }(s, \alpha) \in W. \nonumber
	\end{align}
	As an optimal triple we denote a minimizer of $j(s,\alpha)$ together with $u = \mathcal{S}(s,\alpha)$.
	\begin{definition}[Optimal triple]
		The triple \\$(\bar{s}, \bar{\alpha}, \mathcal{S}(\bar{s}, \bar{\alpha})) \in W^{\mathrm{o}} \times \roomhszero{\overline{2s}}$ is called optimal for the problem (\ref{func1}) if
		\begin{align*}
		~~~~~~j(\bar{s}, \bar{\alpha}) \leq j(s,\alpha)
		\end{align*}
		for all $(s,\alpha, \mathcal{S}(s, \alpha)) \in W^{\mathrm{o}} \times \roomhszero{2s}$.
	\end{definition}
	
	The existence of an optimal triple can easily be shown.
	\begin{theorem}[Existence of a solution]\label{existence}
		There exists an optimal triple \\$(\bar{s}, \bar{\alpha}, \mathcal{S}(\bar{s}, \bar{\alpha})) \in  W^{\mathrm{o}}\times \roomhs{\overline{2s}}$ for problem (\ref{func1}).
	\end{theorem}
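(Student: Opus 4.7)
The plan is to use the direct method of the calculus of variations on the reduced functional $j(s,\alpha)$ defined in (\ref{funcred}), exploiting compactness of the parameter box $W$ together with the barrier property (\ref{steil}) of $\varphi$ to keep the minimizer away from the boundary.

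First I would verify that $j$ is bounded below and that the infimum is finite. Since $\varphi \geq 0$ and the fidelity term is a squared $\ell^2$-norm, clearly $j \geq 0$; to see that $\inf_{W^{\mathrm{o}}} j < \infty$, pick any interior point $(s^*,\alpha^*)$ and note that $\mathcal{S}(s^*,\alpha^*) \in \ell^2$ (by the explicit formula (\ref{operator2}) and $g \in L^2$), while $\varphi(s^*,\alpha^*)$ is finite because $\varphi \in C^2(W^{\mathrm{o}})$. Next I would pick a minimizing sequence $(s_n,\alpha_n) \in W^{\mathrm{o}}$ with $j(s_n,\alpha_n) \to \inf_{W^{\mathrm{o}}} j$. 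Since $W$ is compact in $\R^2$, I can extract a subsequence (not relabeled) with $(s_n,\alpha_n) \to (\bar s, \bar\alpha) \in W$.

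The key step, and the main obstacle, is showing that the limit $(\bar s,\bar\alpha)$ lies in the open interior $W^{\mathrm{o}}$, i.e.\ cannot escape to the boundary. This is where the barrier assumption (\ref{steil}) is essential: if one of the coordinates converged to $s_0,s_1,\alpha_0$ or $\alpha_1$, then $\varphi(s_n,\alpha_n) \to \infty$, and since the fidelity part is non-negative, this would force $j(s_n,\alpha_n) \to \infty$, contradicting that $(s_n,\alpha_n)$ is a minimizing sequence with finite infimum. Hence $(\bar s, \bar\alpha) \in W^{\mathrm{o}}$.

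Once the limit is interior, I would conclude by continuity. Theorem \ref{diffbar} asserts that $\mathcal{S} : W \to \ell^2(\Z^d)$ is Fréchet differentiable and in particular continuous, so $\mathcal{S}(s_n,\alpha_n) \to \mathcal{S}(\bar s,\bar\alpha)$ in $\ell^2$, which gives continuity of the data-fidelity term $\tfrac{1}{2(2\pi)^d}\|\mathcal{S}(s,\alpha) - \widehat{\textbf{U}}_d\|_{\ell^2}^2$. Continuity of $\varphi$ on $W^{\mathrm{o}}$ is part of its definition. Combining these yields $j(\bar s,\bar\alpha) = \lim_n j(s_n,\alpha_n) = \inf_{W^{\mathrm{o}}} j$, so $(\bar s,\bar\alpha,\mathcal{S}(\bar s,\bar\alpha))$ is an optimal triple; the regularity $\mathcal{S}(\bar s,\bar\alpha) \in H^{2\bar s}_0(\torus;\C)$ follows from Lemma \ref{bepartial} (with $i=0$) or directly from the decay $\tfrac{\bar\alpha}{\bar\alpha + |k|^{2\bar s}} |\hat g_k| \lesssim |k|^{-2\bar s}|\hat g_k|$ in the explicit formula (\ref{operator2}).
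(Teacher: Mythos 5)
Your proposal is correct and follows essentially the same route as the paper: a direct-method argument over the compact box $W$, using the barrier property (\ref{steil}) of $\varphi$ to force the limit into $W^{\mathrm{o}}$ and the continuity of $\mathcal{S}$ (Theorem \ref{diffbar}) to pass to the limit in $j$; the paper merely phrases this via an exhaustion of $W^{\mathrm{o}}$ by nested compact rectangles $W_\ell$ with minimizers on each, rather than a single minimizing sequence. Your direct verification of $\mathcal{S}(\bar s,\bar\alpha)\in\roomhszero{2\bar s}$ from the decay of $\tfrac{\bar\alpha}{\bar\alpha+|k|^{2\bar s}}\hat g_k$ is in fact cleaner than the paper's appeal to the ``closed range'' of $\mathcal{S}$.
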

	\begin{proof}
		The proof follows \cite{AntOtaSal18}. \\
		Let $W_{\ell} = [s_{\ell_1},s_{\ell_2}]\times[\alpha_{\ell_1},\alpha_{\ell_2}]$ be a sequence of closed intervals with $W_{\ell} \subset W_{\ell+1} \subset W^{\mathrm{o}}$ and \\
		$\bigcup \limits_{\ell \in \N} W_{\ell} = W$. The continuity of $j$ guarantees the existence of
		\begin{align*}
		\hspace{1cm}(s_{\ell},\alpha_{\ell}) = \underset{(s,\alpha) \in W_{\ell}}{\arg \,min} ~~j(s,\alpha).
		\end{align*}
		Because of the construction of the intervals we have 
		\begin{align*}
		\hspace{1cm}j(s_m, \alpha_m) \leq j(s_{\ell}, \alpha_{\ell})
		\end{align*}
		for all $m  \geq \ell$.\\
		As a result we get a convergent subsequence \\$\{(s_{\ell}, \alpha_{\ell})\}_{\ell \in \N}$ with $(s_{\ell}, \alpha_{\ell}) \rightarrow (\bar{s}, \bar{\alpha}) \in W$. \\
		The property
		\begin{equation}\label{min}
		\hspace{1cm}j(\bar{s}, \bar{\alpha}) \leq j(s_{\ell}, \alpha_{\ell}) 
		\end{equation}
		for all $\ell \in \N$ and $j(s,\alpha) \geq \varphi(s,\alpha)$ together with assumption (\ref{steil}) yields $ (\bar{s}, \bar{\alpha}) \in  W^{\mathrm{o}}$ and the minimizer property. \\
		Due the continuity and the closed range of $\mathcal{S}$ there exists a 
		$\bar{\textbf{u}}$ in the image of $\mathcal{S}$ with
		\begin{align*}
		\hspace{1cm }\mathcal{S}(s_{\ell},\alpha_{\ell}) \rightarrow \bar{\textbf{u}} = (\frac{\alpha}{\alpha+ |k|^{\bar{2s}}}\hat{g}_k)_{k \in \Z^d\setminus \{0\}}
		\end{align*}
		for $\ell \rightarrow \infty$.\\
		Therefore we obtain $\mathcal{S}(\bar{s}, \bar{\alpha}) \in \roomhszero{\overline{2s}}$.\qed
	\end{proof}
	The reduced problem is essentially a restricted optimization problem in 
	$\R^2$, so we can easily provide first order necessary and second order sufficient optimality conditions.
	\begin{theorem}[Optimality conditions]\label{opt}\\
		Let $(\bar{s},\bar{\alpha}, \mathcal{S}(\bar{s}, \bar{\alpha}))$ be an optimal triple for problem (\ref{func1}). For shorter notation we define $\bar{\textbf{u}} := \mathcal{S}(\bar{s}, \bar{\alpha})$. Then the first order optimality conditions must be valid, this implies
		\begin{align*}
		&\hspace{0.5cm}\frac{1}{2(2\pi)^d}\big((\bar{\textbf{u}} - \widehat{\textbf{U}}_d, \partial_s \bar{\textbf{u}})_{l^2} + ( \partial_s \bar{\textbf{u}},\bar{\textbf{u}} - \widehat{\textbf{U}}_d)_{l^2}\big) \\
		&\hspace{0.5cm}+ \partial_s \varphi(\bar{s},\bar{\alpha}) = 0,\\
		&\hspace{0.5cm}\frac{1}{2(2\pi)^d}\big((\bar{\textbf{u}} - \widehat{\textbf{U}}_d, \partial_\alpha \bar{\textbf{u}})_{l^2} +( \partial_\alpha \bar{\textbf{u}},\bar{\textbf{u}} - \widehat{\textbf{U}}_d)_{l^2}\big)\\
		&\hspace{0.5cm}+ \partial_\alpha \varphi(\bar{s},\bar{\alpha}) = 0.
		\end{align*}
		If there exists a pair $(\bar{s}, \bar{\alpha})$ with $\bar{\textbf{u}} = \mathcal{S}(\bar{s}, \bar{\alpha})$, which fulfills the first order optimality conditions and the Hessian matrix $\textbf{A} \in \R^{2 \times 2}$ is positive definit, i.e.
		\begin{align*}
		\hspace{1cm}\det \begin{pmatrix}
		a_{1\,1} & a_{1\,2} \\
		a_{2\,1} & a_{2\,2}
		\end{pmatrix} > 0,
		\end{align*}
		then $(\bar{s}, \bar{\alpha}, \bar{\textbf{u}})$  is an optimal triple for (\ref{func1}).
	\end{theorem}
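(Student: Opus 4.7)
The plan is to reduce the statement to standard calculus in $\R^2$ applied to the reduced functional $j(s,\alpha) = J(s,\alpha,\mathcal{S}(s,\alpha))$ from (\ref{funcred}), exploiting the Fréchet differentiability of $\mathcal{S}$ proved in Theorem~\ref{diffbar}. First I would record that the optimal pair $(\bar{s},\bar{\alpha})$ is necessarily an interior point of $W$: since $j \geq \varphi$ and $\varphi$ blows up at $\partial W$ by (\ref{steil}), a minimizer cannot approach the boundary. Combining the $C^2$-regularity of $\varphi$ with Theorem~\ref{diffbar} yields $j \in C^2(W^{\mathrm{o}},\R)$, so the classical first-order necessary and second-order sufficient conditions for unconstrained optimization in $\R^2$ apply.

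For the first order conditions I would apply the chain rule to the $\ell^2$-squared-norm term. Writing the sesquilinear inner product symmetrically in both arguments (so that the real-valued nature of the derivative is explicit), one obtains
\begin{align*}
\partial_s j(s,\alpha) &= \tfrac{1}{2(2\pi)^d}\bigl[(\mathcal{S}(s,\alpha) - \widehat{\textbf{U}}_d,\partial_s \mathcal{S}(s,\alpha))_{\ell^2} \\
&\quad + (\partial_s \mathcal{S}(s,\alpha), \mathcal{S}(s,\alpha) - \widehat{\textbf{U}}_d)_{\ell^2}\bigr] + \partial_s \varphi(s,\alpha),
\end{align*}
and an analogous formula for $\partial_\alpha j$. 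Since $(\bar{s},\bar{\alpha}) \in W^{\mathrm{o}}$ is a minimizer of $j$, both partial derivatives vanish there, which is exactly the first assertion of the theorem once one substitutes $\bar{\textbf{u}} = \mathcal{S}(\bar{s},\bar{\alpha})$.

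For the sufficient conditions I would invoke the second order Taylor expansion of $j$ at a critical point $(\bar{s},\bar{\alpha})$:
\begin{align*}
j(\bar{s}+h_1,\bar{\alpha}+h_2) - j(\bar{s},\bar{\alpha}) = \tfrac{1}{2}(h_1,h_2)\,\textbf{A}\,(h_1,h_2)^{\top} + o(h_1^2+h_2^2),
\end{align*}
where the Hessian entries $a_{ij}$ are obtained by differentiating the gradient expressions once more, using Theorem~\ref{diffbar} to access the second derivatives of $\mathcal{S}$ together with the product rule on the inner products. Positive definiteness of $\textbf{A}$ (captured by $\det \textbf{A}>0$ plus positivity of one diagonal entry, which one checks directly) combined with continuity of the Hessian then yields strict local minimality of $j$ at $(\bar{s},\bar{\alpha})$, and by the definition of an optimal triple the assertion follows.

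The only genuine technical point, rather than a serious obstacle, is to make the chain rule rigorous for the $\ell^2$-valued map $\mathcal{S}$: one must verify that $\mathcal{S}(s,\alpha)-\widehat{\textbf{U}}_d$ and its first and second derivatives stay in $\ell^2$ uniformly in a neighbourhood of $(\bar{s},\bar{\alpha})$, and that the $\ell^2$-inner product pairs continuously with them. Both ingredients are supplied by the explicit series in Theorem~\ref{diffbar} together with the uniform bounds of Lemma~\ref{bepartial}, so the remaining computation is routine.
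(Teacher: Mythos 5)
Your reduction is exactly what the paper intends: the paper offers no separate proof of this theorem, treating it as the classical first- and second-order conditions for the reduced problem $j(s,\alpha)=J(s,\alpha,\mathcal{S}(s,\alpha))$ in $\R^2$. This is legitimate because Theorem~\ref{diffbar} together with $\varphi\in C^2(W^{\mathrm{o}},\R)$ gives $j\in C^2(W^{\mathrm{o}},\R)$, and interiority of $(\bar s,\bar\alpha)$ is already built into the definition of an optimal triple (your blow-up argument via (\ref{steil}) re-derives it, in the same way as Theorem~\ref{existence}). Your chain-rule computation, with the sesquilinear product written symmetrically so that $\partial_s j$ and $\partial_\alpha j$ are manifestly real, is the right one, and the uniform bounds of Lemma~\ref{bepartial} do justify differentiating the $\ell^2$-norm term; the first assertion follows.

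One step of your sufficiency argument does not go through as written: positive definiteness of $\textbf{A}$ at a critical point yields only a \emph{strict local} minimum of $j$, whereas the paper's definition of an optimal triple demands $j(\bar s,\bar\alpha)\le j(s,\alpha)$ for \emph{all} admissible pairs, so your closing clause ``by the definition of an optimal triple the assertion follows'' conflates local and global minimality. To close this you need a convexity ingredient — e.g. the smallness hypothesis of Lemma~\ref{posdef}, under which $\nabla^2 j\succeq\kappa\,\textbf{I}$ on all of $W$ and every critical point is the global minimizer — or else you must read the conclusion as local optimality only. This imprecision is inherited from the theorem statement itself (note also that $\det\textbf{A}>0$ alone does not characterize positive definiteness; your remark about additionally checking a diagonal entry is the correct repair), so your proposal is as complete as the paper's implicit argument, but the local-to-global step should be stated as an extra assumption rather than absorbed silently.
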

	To get the uniqueness of the optimal tripel, we need a stronger assumption on the function $\varphi$.
	\begin{definition}[Strong convexity, \cite{BoyVan04}]
		Let $W$ be a convex set. A two-times differentiable function $\varphi: W \subset \R^n \rightarrow \R$ is strongly convex, if there exists a constant $\theta > 0$ with 
		\begin{equation*}
		\hspace{1cm}\nabla^2\varphi(x) \succeq \theta \textbf{I}
		\end{equation*}
		for all $x \in W$ , i.e. the matrix $\nabla^2\varphi(x) - \theta I$ is positive definit.
	\end{definition}
	
		The definiton of strong convexity implies (cf. \cite{Nes04})
		\begin{equation} \label{strong1}
		\hspace{1cm}(\nabla\varphi(x)-\nabla\varphi(y))^T \big( x - y \big)\geq \theta \big|
		x - y\big| 
		\end{equation}
		for all $x,y \in W$
		and 
		\begin{equation} \label{strong2}
		\hspace{1cm}\varphi(x)  \geq \varphi(\bar{x}) + \frac{1}{2}\theta | (x - y) |^2 ~~~\forall y \in W.
		\end{equation}
		in the case $\nabla \varphi(\bar{x}) = 0$ for a  $\bar{x} \in W $.	
	In the following we always assume, that the function $\varphi$ is strongly convex. 
	\begin{lemma}\label{posdef}
		If $ \lVert g \rVert$ and
		$\lVert u_d \rVert$ are suffienctly small, then there exists a constant $\kappa > 0$, such that
		\begin{equation}\label{secondderivative}
		\hspace{1cm}\nabla^2 j(s,\alpha) \succeq \kappa \textbf{I}
		\end{equation}
		for all $(s,\alpha) \in W$.
	\end{lemma}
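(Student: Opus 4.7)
The plan is to decompose $\nabla^2 j$ into a strongly convex piece coming from $\varphi$ plus a perturbation that vanishes as $\|g\|$ and $\|u_d\|$ shrink. Concretely, writing
\[
f(s,\alpha) := \frac{1}{2(2\pi)^d}\,\lVert \mathcal{S}(s,\alpha) - \widehat{\mathbf{U}}_d\rVert_{\ell^2}^2,
\]
we have $\nabla^2 j = \nabla^2 f + \nabla^2 \varphi$, and strong convexity gives $\nabla^2\varphi \succeq \theta\mathbf{I}$ on $W$. It therefore suffices to show that $\nabla^2 f$ can be bounded from below by $-\delta\mathbf{I}$ with $\delta$ as small as we wish, and then take $\kappa := \theta - \delta > 0$.

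Using Theorem \ref{diffbar}, the chain rule gives, for $i,j\in\{s,\alpha\}$,
\[
(\nabla^2 f)_{ij} = \frac{1}{(2\pi)^d}\,\mathrm{Re}\bigl(\partial_i\mathcal{S},\partial_j\mathcal{S}\bigr)_{\ell^2}
+ \frac{1}{(2\pi)^d}\,\mathrm{Re}\bigl(\mathcal{S}-\widehat{\mathbf{U}}_d,\partial_{ij}\mathcal{S}\bigr)_{\ell^2}.
\]
The first summand defines a Gram-type matrix $G(s,\alpha)$ which is automatically positive semidefinite, so it can only help. The remaining matrix $R(s,\alpha)$ is the only possibly negative contribution; by Cauchy--Schwarz,
\[
|R_{ij}| \le \frac{1}{(2\pi)^d}\bigl(\lVert\mathcal{S}(s,\alpha)\rVert_{\ell^2}+\lVert\widehat{\mathbf{U}}_d\rVert_{\ell^2}\bigr)\,\lVert\partial_{ij}\mathcal{S}(s,\alpha)\rVert_{\ell^2}.
\]

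Next I would obtain uniform-in-$(s,\alpha)\in W$ bounds for the factors on the right. From the explicit expression $\mathcal{S}(s,\alpha)=\bigl(\tfrac{\alpha}{\alpha+|k|^{2s}}\hat g_k\bigr)_k$ one has $\lVert\mathcal{S}(s,\alpha)\rVert_{\ell^2}\le \lVert g\rVert$, while $\lVert\widehat{\mathbf{U}}_d\rVert_{\ell^2}\le C\lVert u_d\rVert$ by Parseval. For the second-order partials, Lemma~\ref{bepartial} applied with a fixed small $\varepsilon>0$ (together with the companion $\ell^2$-estimates derived directly from the formulas in Theorem \ref{diffbar}, i.e.\ pulling out the uniform factors $|k|^{2s}/(|k|^{2s}+\alpha)^2 \le \min(\alpha_0^{-1}|k|^{-2s_0},|k|^{-4s_0})$ and the uniformly bounded $\ln|k|$-to-power-weights) yields a constant $C_W$, depending only on $s_0,s_1,\alpha_0,\alpha_1,\varepsilon,d$, such that
\[
\lVert\partial_{ij}\mathcal{S}(s,\alpha)\rVert_{\ell^2}\le C_W\lVert g\rVert \qquad\forall\,(s,\alpha)\in W,\ i,j\in\{s,\alpha\}.
\]
Combining these, $|R_{ij}|\le \tilde C_W\lVert g\rVert\bigl(\lVert g\rVert+\lVert u_d\rVert\bigr)$, and hence the spectral norm satisfies $\lVert R(s,\alpha)\rVert\le 2\tilde C_W\lVert g\rVert\bigl(\lVert g\rVert+\lVert u_d\rVert\bigr)=:\delta(g,u_d)$ uniformly on $W$.

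Putting the pieces together gives
\[
\nabla^2 j(s,\alpha)\succeq \nabla^2\varphi(s,\alpha)+G(s,\alpha)-\delta(g,u_d)\mathbf{I}\succeq \bigl(\theta-\delta(g,u_d)\bigr)\mathbf{I},
\]
so choosing $\lVert g\rVert$ and $\lVert u_d\rVert$ small enough that $\delta(g,u_d)<\theta$ produces the claim with $\kappa:=\theta-\delta(g,u_d)>0$. The main obstacle I anticipate is the bookkeeping in step three: the bounds from Lemma~\ref{bepartial} are stated in $\ell^1$-type form with a free parameter $\varepsilon$ and a log-induced constant $M_{\alpha_1}$, so one must recast them as genuine $\ell^2$-bounds and verify that the constant obtained is actually uniform on the closed box $W$ — this is where the strict positivity of $s_0$ and $\alpha_0$, which prevents the denominators from degenerating, becomes essential.
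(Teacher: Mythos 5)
Your proposal is correct and follows essentially the same route as the paper: the paper also splits the Hessian into the positive semidefinite Gramian of $\partial_s\mathcal{S},\partial_\alpha\mathcal{S}$ (your $G$), the residual-times-second-derivative terms (your $R$, which the paper writes as two matrices $\textbf{M}_1,\textbf{M}_2$ before taking real parts), and $\nabla^2\varphi\succeq\theta\textbf{I}$, then bounds the perturbation by $C(\lVert g\rVert+\lVert u_d\rVert)\lVert g\rVert$ via Lemma~\ref{bepartial} and concludes by smallness. Your closing remark about recasting the bounds of Lemma~\ref{bepartial} as uniform $\ell^2$ multiplier estimates on $W$ is exactly the (implicit) bookkeeping the paper relies on as well.
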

	
	\begin{proof}
		Decompose the Hessian matrix $\textbf{A}$ from  (\ref{opt}) in four parts with
		\begin{equation*}
		x^T\textbf{A}x = x^T\textbf{M}_1x + x^T\textbf{M}_2x + x^T\textbf{M}_3x + x^T\textbf{M}_4x
		\end{equation*}
		for arbitrary $x \in \R^2$.
		The matrix $\textbf{M}_1$ 
		\begin{align*}
		\hspace{1cm}\textbf{M}_1 & = \frac{1}{2(2\pi)^d}\left( \begin{array}{rr}
		m^1_{11}&m^1_{12}\\
		m^1_{21}&m^1_{22}
		\end{array}\right)
		\end{align*}
		has the values
		\begin{align*}
		m^1_{11} = (\mathcal{S}(s,\alpha) - \widehat{\textbf{U}}_d,\partial^2_{s}\mathcal{S}(s,\alpha))_{\ell^2},\\
		m^1_{12} = m^1_{21} = (\mathcal{S}(s,\alpha) - \widehat{\textbf{U}}_d,\partial^2_{s,\alpha}\mathcal{S}(s,\alpha))_{\ell^2},\\
		m^1_{22} = (\mathcal{S}(s,\alpha) - \widehat{\textbf{U}}_d,\partial^2_{\alpha}\mathcal{S}(s,\alpha))_{\ell^2}.
		\end{align*}
		The matrix $\textbf{M}_2$ is defined analogously to $\textbf{M}_1$, only the arguments in the $\ell^2$-scalar-product are interchanged. Therefore the following arguments for matrix $\textbf{M}_1$ are also valid for matrix $\textbf{M}_2$.
		We obtain 
		\begin{align*}
		\hspace{1cm}x^T \textbf{M}_1 x = m^1_{11}x^2_1 + 2m^1_{12}m^1_{21}x_1x_2 + m^1_{22}x^2_2 \\\hspace{1cm}\geq m^1_{11}x^2_1 -  2|m^1_{12}||m^1_{21}||x_1 x_2| + m^1_{22}x^2_2
		\end{align*}
		for arbitrary $x \in \R^2$. The Cauchy-Schwarz and Young inequality imply
		\begin{align*}
		\hspace{1cm}x^TM_1x \geq (-|m^1_{11}| - |m^1_{12}||m^1_{21}|)x^2_1\\
		\hspace{1cm}+~(-|m^1_{22}| - |m^1_{12}||m^1_{21}|)x^2_2.
		\end{align*}
		Together with Lemma \ref{bepartial} we have
		\begin{align*} 	
		\hspace{1cm}|m^1_{ij}| \leq \frac{C_{ij}}{s_0}( \lVert g \rVert + \lVert u_d \rVert)\lVert g \rVert.
		\end{align*}
		The matrix $\textbf{M}_3$ is the Gramian matrix between  the vectors $\partial_{s}\mathcal{S}(s,\alpha)$ and $\partial_{\alpha}\mathcal{S}(s,\alpha)$, which implies $x^T\textbf{M}_2x \geq 0$ for all $x \in \R^2$.\\
		The strong convexity of the function $\varphi$ yields for the matrix $\textbf{M}_4$, which is defined as 
		\begin{align*}
		\hspace{0.5cm}\textbf{M}_4 = \left( \begin{array}{rr}
		\partial^2_{s} \varphi(s,\alpha)&\partial^2_{s,\alpha} \varphi(s,\alpha)\\
		\partial^2_{s,\alpha} \varphi(s,\alpha)&\partial^2_{\alpha} \varphi(s,\alpha)
		\end{array}\right),
		\end{align*}
		the result 
		\begin{equation*}
		\hspace{1cm }x^T\textbf{M}_4x \geq \theta |x|^2.
		\end{equation*}
		Combining all arguments we obtain
		\begin{align*}
		\hspace{1cm}x^T\textbf{A}x \geq -C(\lVert u_d \rVert, \lVert g \rVert, \frac{1}{s_0})|x|^2 + \theta |x|^2.
		\end{align*}
		Therefore we have for suffienctly small $\lVert u_d \rVert$ and $\lVert g \rVert$, that the constant $C(\lVert u_d \rVert, \lVert g \rVert, \frac{1}{s_0})$ is smaller than $\theta$. This implies the positiv definitness of the matrix $\textbf{A}$.\qed
	\end{proof}
		If the  prerequisites of Lemma 4 are fulfilled, we obtain for an optimal triple $(\bar{s},\bar{\alpha}, \mathcal{S}(\bar{s}, \bar{\alpha}))$  
		\begin{align*}
		&\hspace{1cm}(\nabla j(s,\alpha)- \nabla j(\bar{s},\bar{\alpha}))^T\Big(\binom{s}{\alpha}- \binom{\bar{s}}{\bar{\alpha}}\Big) \\
		&\hspace{1cm}\geq \kappa \Big|\binom{s}{\alpha}- \binom{\bar{s}}{\bar{\alpha}}\Big|^2~~~
		\end{align*}
		and 
		\begin{align*}
		\hspace{1cm}j(s,\alpha) \geq j(\bar{s},\bar{\alpha}) + \frac{\kappa}{2} \Big|\binom{\bar{s}}{\bar{\alpha}}- \binom{s}{\alpha}\Big|^2~~~
		\end{align*}
		for all  $(s,\alpha) \in W$. The quadratic growth conditon implies the uniqueness of the optimal triple.
	\section{Spectral approximation and convergence analysis}
	For given $g_n \in \mathcal{T}_n$ we have the discrete problem
	\begin{align*}
	~~~~~~\frak{s}u_n + \alpha u_n = \alpha g_n ~~ \text{in} ~\mathcal{T}_n.
	\end{align*}
	Define the discrete solution operator $\mathcal{S}_n$ via
	\begin{align*}
	~~~~~\mathcal{S}_n: [s_0,s_1]\times[\alpha_0,\alpha_1] \rightarrow \mathcal{T}_n, \\
	~~~~~(s,\alpha) \mapsto (\frac{\alpha}{\alpha + |k|^{2s}}\hat{{g}}_{n,k})_{k \in \Z^d_n}.
	\end{align*}
	The following Lipschitz continuity holds.
	\begin{lemma}[Stability]\label{stabi}
		Let $\textbf{u} = \mathcal{S}(s, \alpha)$ be the continous and $\textbf{u}_n = \mathcal{S}_n(s, \alpha)$ the discrete solution for $(s,\alpha) \in W$.
		Then we have the following error estimate between these both solutions 
		\begin{align*}
		~~~~~~|\textbf{u}-\textbf{u}_n|_s \leq \alpha|\widehat{\textbf{G}}-\widehat{\textbf{G}}_n|_{-s}.
		\end{align*}
	\end{lemma}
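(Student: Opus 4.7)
The plan is to reduce the estimate to a one-line pointwise multiplier bound in Fourier space. First I would observe that, since $g_n \in \mathcal{T}_n$, its Fourier coefficients $\hat{g}_{n,k}$ vanish for $k \notin \Z^d_n$; if I extend $\mathcal{S}_n(s,\alpha)$ trivially by zero to $\Z^d \setminus \{0\}$ (which is consistent with viewing $u_n$ as an element of $\ltoruszero$, because $u_n \in \mathcal{T}_n$), the two explicit formulas for $\mathcal{S}(s,\alpha)$ and $\mathcal{S}_n(s,\alpha)$ combine into the single identity
\begin{equation*}
\widehat{(\textbf{u} - \textbf{u}_n)}_k = \frac{\alpha}{\alpha + |k|^{2s}}\bigl(\hat{g}_k - \hat{g}_{n,k}\bigr), \qquad k \in \Z^d\setminus\{0\}.
\end{equation*}

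Second, I would insert this into the spectral definition of the norm $|\cdot|_s$, factor out $\alpha^2$, and rewrite the weight so that the weight $|k|^{-2s}$ characterizing $|\cdot|_{-s}$ is isolated:
\begin{equation*}
|\textbf{u} - \textbf{u}_n|_s^2 = \alpha^2 \sum_{k \neq 0}\left(\frac{|k|^{2s}}{\alpha + |k|^{2s}}\right)^{\!2}|k|^{-2s}\,|\hat{g}_k - \hat{g}_{n,k}|^2.
\end{equation*}

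Third, the symbol $|k|^{2s}/(\alpha + |k|^{2s})$ is bounded by $1$ pointwise because $\alpha>0$, so dropping this factor yields
\begin{equation*}
|\textbf{u} - \textbf{u}_n|_s^2 \leq \alpha^2 \sum_{k \neq 0} |k|^{-2s}|\hat{g}_k - \hat{g}_{n,k}|^2 = \alpha^2\,|\widehat{\textbf{G}} - \widehat{\textbf{G}}_n|_{-s}^2,
\end{equation*}
and taking square roots gives the claimed inequality.

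There is no real obstacle here: everything reduces, after passage to Fourier coefficients, to the trivial multiplier bound $|k|^{2s}/(\alpha+|k|^{2s}) \leq 1$. The only point that deserves a small sanity check is the identification of the discrete solution with a sequence in $\lsmall$ supported on $\Z^d_n$, which guarantees that the continuous and the discrete $|\cdot|_s$ norms are both computed by the same weighted $\ell^2$-sum and that the extension by zero does not introduce any extra contribution.
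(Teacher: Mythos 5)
Your proof is correct, but it follows a different route than the paper. You exploit the fact that both $\mathcal{S}(s,\alpha)$ and $\mathcal{S}_n(s,\alpha)$ are given by the \emph{same} explicit Fourier multiplier $\frac{\alpha}{\alpha+|k|^{2s}}$ (acting on $\widehat{\textbf{G}}$ and on the zero-extension of $\widehat{\textbf{G}}_n$, respectively), so the error is diagonal in frequency and the estimate reduces to the pointwise bound $\frac{|k|^{2s}}{\alpha+|k|^{2s}}\leq 1$; your zero-extension remark is exactly the right sanity check, since $\hat{g}_{n,k}=0$ for $k\notin\Z^d_n$ makes the identity
\begin{equation*}
\widehat{(\textbf{u}-\textbf{u}_n)}_k=\frac{\alpha}{\alpha+|k|^{2s}}\bigl(\hat{g}_k-\hat{g}_{n,k}\bigr)
\end{equation*}
valid for all $k\neq 0$. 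The paper instead argues variationally: it subtracts the two state equations $\frak{s}\textbf{u}+\alpha\textbf{u}=\alpha\widehat{\textbf{G}}$ and $\frak{s}\textbf{u}_n+\alpha\textbf{u}_n=\alpha\widehat{\textbf{G}}_n$, tests with $\textbf{u}-\textbf{u}_n$ to get $|\textbf{u}-\textbf{u}_n|_s^2+\alpha|\textbf{u}-\textbf{u}_n|_0^2=\alpha(\widehat{\textbf{G}}-\widehat{\textbf{G}}_n,\textbf{u}-\textbf{u}_n)_{\ell^2}$, and then concludes by the duality (weighted Cauchy--Schwarz) estimate $(\widehat{\textbf{G}}-\widehat{\textbf{G}}_n,\textbf{u}-\textbf{u}_n)_{\ell^2}\leq|\widehat{\textbf{G}}-\widehat{\textbf{G}}_n|_{-s}\,|\textbf{u}-\textbf{u}_n|_s$. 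Your multiplier argument is more elementary, avoids the duality step (and the slightly awkward $(1+\alpha)$ intermediate bound in the paper's chain), and would even retain the sharper frequency-dependent factor if one kept $\bigl(\frac{|k|^{2s}}{\alpha+|k|^{2s}}\bigr)^2$ instead of dropping it; the paper's energy argument, on the other hand, does not rely on the explicit diagonal solution formula, yields the extra nonnegative term $\alpha|\textbf{u}-\textbf{u}_n|_0^2$ for free, and is the pattern that generalizes to discretizations where the solution operator is not explicitly invertible frequency by frequency.
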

	\begin{proof}We have
		\begin{align*}
		&\hspace{0.5cm}|\textbf{u}-\textbf{u}_n|_s^2 = \lVert \frak{\frac{s}{2}}(\textbf{u}-\textbf{u}_n) \rVert^2_{\ell^2} \\
		&\hspace{0.5cm}= (\frak{s}(\textbf{u}-\textbf{u}_n),\textbf{u}-\textbf{u}_n)_{\ell^2}\\
		&\hspace{0.5cm}= (\alpha \widehat{\textbf{G}}- \alpha \textbf{u} - \alpha \hat{\textbf{g}}_n + \alpha \textbf{u}_n, \textbf{u}- \textbf{u}_n)_{\ell^2}\\
		&\hspace{0.5cm}=\! \alpha(\widehat{\textbf{G}}- \widehat{\textbf{G}}_n, \textbf{u}- \textbf{u}_n)_{\ell^2} \!-\! \alpha(\textbf{u}-\textbf{u}_n,\textbf{u}-\textbf{u}_n)_{\ell^2}
		\end{align*}
		\begin{equation*}
		\iff |\textbf{u}-\textbf{u}_n|_s^2 + \alpha |\textbf{u}-\textbf{u}_n|_0^2 = \alpha (\widehat{\textbf{G}}-\widehat{\textbf{G}}_n,\textbf{u}- \textbf{u}_n)_{\ell^2}.
		\end{equation*}
		For $s,r \in \R$  with  $s\leq r$ we have the estimate
		\begin{equation}
		\hspace{1cm}|\widehat{\textbf{V}}|_s \leq |\widehat{\textbf{V}}|_r \label{cool}
		\end{equation}
		and therefore
		\begin{align*}
		\hspace{0.8cm}|\textbf{u}-\textbf{u}_n|_s^2 + \alpha |\textbf{u}-\textbf{u}_n|_0^2 \leq (1+\alpha)|\textbf{u}-\textbf{u}_n|_s^2  \\
		\hspace{0.8cm}\leq (1+\alpha)\alpha|\widehat{\textbf{G}}-\widehat{\textbf{G}}_n|_{-s}|\textbf{u}-\textbf{u}_n|_s,
		\end{align*}
		which implies the assertion.\qed
	\end{proof}
	The discrete solution operator $\mathcal{S}_n$ allows us to define an approximation for the optimization problem $j$. 
	\begin{definition}
		The  discrete optimization problem  $j_n$ with $n \in \N$ is defined as 
		\begin{align}\label{endlichbi}
		&\hspace{1cm}\min_{(s,\alpha) \in W} j_n(s,\alpha) \\
		&\hspace{1cm}= \frac{1}{2(2\pi)^{d}}\lVert S_n(s,\alpha)- \widehat{\textbf{U}}_d^n \rVert^2_{l^2} +  \varphi(s,\alpha)\nonumber,
		\end{align}
		with $\widehat{\textbf{U}}_d^n$ is a suitable approximation of $\widehat{\textbf{U}}_d$ in the space $\mathcal{T}_n$.\\
		The associated unique optimal triple for the problem (\ref{endlichbi}) is denoted by $(\bar{s}_n,\bar{\alpha}_n, \mathcal{S}_n(\bar{s}_n, \bar{\alpha}_n))$.
	\end{definition}
	
	\begin{theorem}[Convergence of the projection]\label{konproj}
	Let $g, g_n \in \roomhszero{s_2}$ be and $u_d, u_d^n \in \roomhszero{s_3}$  with $s_2,s_3 > 0$ for the continous and discrete optimization problem~(\ref{func1red}) respectively (\ref{endlichbi}). The functions $g_n, u_d^n$ are defined as the projections  of $g$ and $u_d$ in $\mathcal{T}_n$.\\
	If $ \lVert g \rVert$ and $\lVert u_d \rVert$ are sufficient small, then there exists a constant $\kappa > 0$, such that the Hessian matrix
	\begin{equation}\label{secondderivative}
	\hspace{1cm}\nabla^2 j(s,\alpha)  \succeq \kappa \textbf{I}
	\end{equation}
	for all $(s,\alpha) \in W$.\\
	The associated optimal triples are denoted by \\
	$\hspace{1cm}(\bar{s}, \bar{\alpha}, \bar{\textbf{u}} = \S(\bar{s},\bar{\alpha}))$ respectively \\
	$\hspace{1cm}(\bar{s}_n, \bar{\alpha}_n, \bar{\textbf{u}}_n = \S_n(\bar{s}_n,\bar{\alpha}_n))$. \\
	Then we obtain the following error estimate between the discrete solution
	$(\bar{s}_n,\bar{\alpha}_n)$ and the continuous solution $(\bar{s},\bar{\alpha})$ 
	\begin{align*}
	\Big|\binom{\bar{s}_n}{\bar{\alpha}_n}- \binom{\bar{s}}{\bar{\alpha}}\Big| \leq \frac{C(s_0,\alpha_1)}{\kappa}  (\frac{n}{2})^{-\omega}\max\{|g|_{\omega},|u_d|_{\omega}\},
	\end{align*}
	with $\omega := \min\{s_2,s_3\}$.
\end{theorem}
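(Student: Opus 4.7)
The plan is a classical Strang-type argument adapted to the bilevel problem. By Lemma~\ref{posdef} the reduced functional $j$ is strongly convex with constant $\kappa$, so $\nabla j$ is $\kappa$-monotone. The barrier growth condition (\ref{steil}) forces both minimizers into the interior $W^{\mathrm{o}}$ (for the discrete minimizer the argument of Theorem~\ref{existence} carries over verbatim), whence $\nabla j(\bar s,\bar\alpha)=0$ and $\nabla j_n(\bar s_n,\bar\alpha_n)=0$. Combining strong monotonicity with these identities and Cauchy--Schwarz yields
\begin{equation*}
\Big|\binom{\bar s_n}{\bar\alpha_n}-\binom{\bar s}{\bar\alpha}\Big|\leq \frac{1}{\kappa}\big|\nabla j(\bar s_n,\bar\alpha_n)-\nabla j_n(\bar s_n,\bar\alpha_n)\big|,
\end{equation*}
so the proof reduces to a pointwise \emph{consistency estimate} for the two gradients at the discrete optimum.

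Using Theorem~\ref{diffbar}, the $\varphi$-contribution in $\partial_s j-\partial_s j_n$ cancels and only the difference of the scalar products $(\mathcal{S}(s,\alpha)-\widehat{\textbf{U}}_d,\partial_s\mathcal{S}(s,\alpha))_{\ell^2}$ and $(\mathcal{S}_n(s,\alpha)-\widehat{\textbf{U}}_d^n,\partial_s\mathcal{S}_n(s,\alpha))_{\ell^2}$ remains. I would telescope it as
\begin{align*}
&(\mathcal{S}-\widehat{\textbf{U}}_d,\partial_s\mathcal{S})_{\ell^2}-(\mathcal{S}_n-\widehat{\textbf{U}}_d^n,\partial_s\mathcal{S}_n)_{\ell^2}\\
&\quad=(\mathcal{S}-\mathcal{S}_n,\partial_s\mathcal{S}_n)_{\ell^2}+(\widehat{\textbf{U}}_d^n-\widehat{\textbf{U}}_d,\partial_s\mathcal{S}_n)_{\ell^2}\\
&\qquad+(\mathcal{S}-\widehat{\textbf{U}}_d,\partial_s\mathcal{S}-\partial_s\mathcal{S}_n)_{\ell^2},
\end{align*}
and treat $\partial_\alpha j-\partial_\alpha j_n$ in the same way.

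Since $g_n=P_n g$ and $u_d^n=P_n u_d$, each of the three ``error'' factors $\mathcal{S}-\mathcal{S}_n$, $\widehat{\textbf{U}}_d-\widehat{\textbf{U}}_d^n$ and $\partial_s\mathcal{S}-\partial_s\mathcal{S}_n$ is supported on Fourier modes $|k|_\infty > n/2$; the projection Lemma~\ref{proj} then bounds each of them in $\ell^2$ by $C(s_0,\alpha_1)(n/2)^{-\omega}\max\{|g|_\omega,|u_d|_\omega\}$, using that the symbols $\tfrac{\alpha}{\alpha+|k|^{2s}}$ and $\tfrac{2\alpha|k|^{2s}\ln|k|}{(|k|^{2s}+\alpha)^2}$ (and their $\alpha$-analogues from Theorem~\ref{diffbar}) are uniformly bounded in $k$ for $(s,\alpha)\in W$. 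The ``multiplier'' factors $\partial_s\mathcal{S}_n$, $\partial_\alpha\mathcal{S}_n$ and $\mathcal{S}-\widehat{\textbf{U}}_d$ are in turn uniformly bounded in $\ell^2$ on $W$ by Lemma~\ref{bepartial} together with the smallness assumption on $\|g\|$ and $\|u_d\|$. Summing the three contributions for both partial derivatives produces $|\nabla j(\bar s_n,\bar\alpha_n)-\nabla j_n(\bar s_n,\bar\alpha_n)|\leq C(s_0,\alpha_1)(n/2)^{-\omega}\max\{|g|_\omega,|u_d|_\omega\}$, and inserting this into the Strang estimate above finishes the proof. I expect the main obstacle to be the uniform-in-$(s,\alpha)$ control of $\partial_s\mathcal{S}-\partial_s\mathcal{S}_n$: the logarithmic weight in the representation of $\partial_s\mathcal{S}$ must be absorbed by an $\varepsilon$-argument as in Lemma~\ref{bepartial} so that the projection rate $(n/2)^{-\omega}$ is preserved and the constant depends only on $s_0,\alpha_1$.
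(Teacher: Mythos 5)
Your proposal is correct and takes essentially the same route as the paper: strong convexity of $j$ together with the vanishing gradients at the two interior minimizers reduces the problem to a consistency estimate for $\nabla j - \nabla j_n$ at $(\bar{s}_n,\bar{\alpha}_n)$, which is then telescoped and bounded via the projection error of Lemma \ref{proj} and the derivative bounds behind Lemma \ref{bepartial}. The only cosmetic difference is that you control $\lVert \mathcal{S}-\mathcal{S}_n\rVert_{\ell^2}$ directly through the uniformly bounded symbols (valid since $s_0>0$ absorbs the logarithm), whereas the paper invokes the stability Lemma \ref{stabi}; both yield the same $C(s_0,\alpha_1)(n/2)^{-\omega}$ rate.
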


\begin{proof}
The strong convexity of $j$ and \\
$\nabla j(\bar{s},\bar{\alpha}) = \nabla j_n(\bar{s}_n, \bar{\alpha}_n) = 0$ imply
\begin{align*}
&\kappa \Big|\binom{\bar{s}_n}{\bar{\alpha}_n}- \binom{\bar{s}}{\bar{\alpha}}\Big|^2 \\
&\leq (\nabla j(\bar{s}_n,\bar{\alpha}_n)- \nabla j(\bar{s},\bar{\alpha}))^T(\binom{\bar{s}_n}{\bar{\alpha}_n}- \binom{\bar{s}}{\bar{\alpha}}) \\ 
&= (\nabla j(\bar{s}_n,\bar{\alpha}_n)- \nabla j_n(\bar{s}_n,\bar{\alpha}_n))^T(\binom{\bar{s}_n}{\bar{\alpha}_n}- \binom{\bar{s}}{\bar{\alpha}}).
\end{align*}
As a result we obtain 
\begin{align*}
&\hspace{0cm}\kappa \Big|\binom{\bar{s}_n}{\bar{\alpha}_n}- \binom{\bar{s}}{\bar{\alpha}}\Big|
 \leq |\partial_s j(\bar{s}_n,\bar{\alpha}_n) - \partial_s j_n(\bar{s}_n,\bar{\alpha}_n) |
 \\&\hspace{0cm} + ~|\partial_\alpha j(\bar{s}_n,\bar{\alpha}_n) - \partial_\alpha j_n(\bar{s}_n, \bar{\alpha}_n)|.
\end{align*}
In the following we denote $\mathcal{S}(\bar{s}_n,\bar{\alpha}_n)$ as $\three$.
For the partial derivatives with respect to variable  $s$ we have 
\begin{align*}
&\hspace{0.5cm}|\partial_s j(\bar{s}_n,\bar{\alpha}_n) - \partial_s j_n(\bar{s}_n,\bar{\alpha}_n) |\\
&\hspace{0.5cm}=  \frac{1}{2(2\pi)^d}\bigg|(\three - \widehat{\textbf{U}}_d, \partial_s\three)_{l^2}-(\two - \widehat{\textbf{U}}_d, \partial_s \two)_{\ell^2} \\
&\hspace{0.5cm}+ (\three,\three - \widehat{\textbf{U}}_d)_{\ell^2} -  (\partial_s \two,\two - \widehat{\textbf{U}}_d)_{\ell^2}\bigg|.  
\end{align*} 
Because of the anti-symmetry of the $\ell^2$-scalar product we only consider the first two summands. We have
\begin{align*}
&\bigg| (\three - \widehat{\textbf{U}}_d, \partial_s \three)_{\ell^2} - (\two - \widehat{\textbf{U}}_d^n, \partial_s \two)_{\ell^2}\bigg| \leq\\
& \bigg| (\three - \widehat{\textbf{U}}_d, \partial_s \three)_{\ell^2} - (\two 
- \widehat{\textbf{U}}_d^n, \partial_s \three)_{\ell^2}\bigg| +\\ &\bigg|(\two - \widehat{\textbf{U}}_d^n, \partial_s \three)_{\ell^2} - (\two 
- \widehat{\textbf{U}}_d^n, \partial_s \two)_{\ell^2}\bigg| =: A + B.   
\end{align*}
For the summands $A$ and $B$ we obtain
\begin{align*}
&A \leq \lVert \three - \two \rVert_{\ell^2}\lVert\partial_s \three \rVert_{\ell^2} + \lVert \widehat{\textbf{U}}_d - \widehat{\textbf{U}}_d^n \rVert_{l^2}\lVert \partial_s \three \rVert_{\ell^2}
\end{align*}
and
\begin{align*}
&B \leq \lVert\partial_s\three - \partial_s\two\rVert_{\ell^2}\lVert \two\rVert_{\ell^2} + \\
&\lVert \widehat{\textbf{U}}_d^n \rVert_{\ell^2}\lVert\partial_s \three - \partial_s \two \rVert_{\ell^2}.
\end{align*}
The terms $\lVert\partial_s \three \rVert_{\ell^2},~\lVert \two \rVert_{\ell^2}$ and $\lVert \widehat{\textbf{U}}_d^n \rVert_{\ell^2}$ in summands $A$ and $B$ are bounded with a constant $C$ independent from $n$. The constant $C$ only depends on $s_0, \alpha_1$ and $ \lVert \widehat{\textbf{G}} \rVert_{\ell^2}$.\\
Moreover, we have with Lemma \ref{stabi} and estimate (\ref{cool})
\begin{align*}
\hspace{1cm}\lVert \three  - \two \rVert_{\ell^2} \leq \bar{\alpha}_n\lVert \widehat{\textbf{G}} - \widehat{\textbf{G}}_n \lVert_{\ell^2}.
\end{align*}
Lemma \ref{bepartial} implies
\begin{align*}
&\hspace{0.5cm}\lVert\partial_s\three - \partial_s\two\rVert_{\ell^2}\lVert\two\rVert_{\ell^2} \\
&\hspace{0.5cm}\leq C(s_0, M_{{\alpha_1},1}, \lVert g \rVert)\lVert \widehat{\textbf{G}} - \widehat{\textbf{G}}_n \lVert_{\ell^2}.
\end{align*}
Lemma \ref{proj} and estimate (\ref{cool}) yield 
\begin{align*}
&\hspace{1cm}|\partial_s j(\bar{s}_n,\bar{\alpha}_n) - \partial_s j_n(\bar{s}_n,\bar{\alpha}_n) | \\
&\hspace{1cm}\leq C(s_0,\alpha_1) (\frac{n}{2})^{-\omega} \max\{|g|_{\omega},|u_d|_{\omega}\}.
\end{align*}
with $\omega = \min \{s_2,s_3\}$.
An analogous argumentation for 
$|\partial_\alpha j(\bar{s}_n,\bar{\alpha}_n) - \partial_\alpha j_n(\bar{s}_n, \bar{\alpha}_n)|$ has as result
\begin{align*}
&\hspace{1cm}|\partial_\alpha j(\bar{s}_n,\bar{\alpha}_n) - \partial_\alpha j_n(\bar{s}_n, \bar{\alpha}_n)|  \\
&\hspace{1cm}\leq \tilde{C} (\frac{n}{2})^{-\omega} \max\{|g|_{\omega},|u_d|_{\omega}\},
\end{align*}
with $\tilde{C} > 0$ not depends on $s_0, \alpha_1$.\qed
\end{proof}
In the case of the Fourier interpolation we can prove a similar result, but we need stronger assumptions.
\begin{theorem}[Convergence of the interpolation]~\\
Let $g, g_n \in \roomhszero{s_2}$ and $u_d, u_d^n \in \roomhszero{s_3}$ with $\omega := \min\{s_2,s_3\} > \frac{d}{2}$ be given for the continous and discrete optimization problem (\ref{func1red}) respectively (\ref{endlichbi}). The discrete functions $g_n, u_d^n$ are defined as the Fourier interpolation  of the functions. If $ \lVert g \rVert$ and  $\lVert u_d \rVert$ are sufficient small, then there exists a constant $\kappa > 0$, such that the Hessian matrix
\begin{equation*}
\hspace{1cm}\nabla^2 j(s,\alpha) \succeq \kappa\textbf{I}
\end{equation*}
for all $(s,\alpha) \in W$.
The associated optimal triple we denote by 
$(\bar{s}, \bar{\alpha}, \S(\bar{s},\bar{\alpha}))$ and  $(\bar{s}_n, \bar{\alpha}_n,  \S_n(\bar{s}_n,\bar{\alpha}_n))$.
Then we have following error estimate between the discrete solution 
$(\bar{s}_n,\bar{\alpha}_n)$ and the continous solution $(\bar{s},\bar{\alpha})$
\begin{align*}
&\hspace{1cm}\Big|\binom{\bar{s}_n}{\bar{\alpha}_n}- \binom{\bar{s}}{\bar{\alpha}}\Big| \\
&\hspace{1cm}\leq \frac{C(s_0,\alpha_1,d,\omega)}{\kappa}  (\frac{n}{2})^{-\omega}\max\{|g|_{\omega},|u_d|_{\omega}\}.
\end{align*}
\end{theorem}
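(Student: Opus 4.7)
The plan is to mirror the proof of Theorem~\ref{konproj} step by step, replacing each invocation of the projection error bound (Lemma~\ref{proj}) by the interpolation error bound (Lemma~\ref{inter}), and to track carefully where the additional hypothesis $\omega>d/2$ and the new constant $c_{d,\gamma_1,\gamma_2}$ enter.

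First I would exploit the strong convexity of $j$ together with the first order conditions $\nabla j(\bar s,\bar\alpha)=0=\nabla j_n(\bar s_n,\bar\alpha_n)$. Exactly as in Theorem~\ref{konproj} this yields
\begin{align*}
\kappa\Big|\binom{\bar s_n}{\bar\alpha_n}-\binom{\bar s}{\bar\alpha}\Big|
\leq |\partial_s j(\bar s_n,\bar\alpha_n)-\partial_s j_n(\bar s_n,\bar\alpha_n)|\\
+|\partial_\alpha j(\bar s_n,\bar\alpha_n)-\partial_\alpha j_n(\bar s_n,\bar\alpha_n)|,
\end{align*}
so the task reduces to controlling these two scalar differences.

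Second, I would split each of these differences as in the previous proof into summands $A$ and $B$ by triangle inequality, inserting the mixed term $(\two-\widehat{\textbf{U}}_d^n,\partial_s\three)_{\ell^2}$. The bounds $\|\partial_s\three\|_{\ell^2}$, $\|\two\|_{\ell^2}$ and $\|\widehat{\textbf{U}}_d^n\|_{\ell^2}$ remain uniformly bounded in $n$ by Lemma~\ref{bepartial} applied at $(\bar s_n,\bar\alpha_n)\in W$, with a constant depending only on $s_0$, $\alpha_1$, and the data norms. The differences $\|\three-\two\|_{\ell^2}$ and $\|\partial_s\three-\partial_s\two\|_{\ell^2}$ are in turn controlled by $\|\widehat{\textbf{G}}-\widehat{\textbf{G}}_n\|_{\ell^2}$ via Lemma~\ref{stabi} together with estimate~(\ref{cool}); the argument for $\partial_s$ of the solution operator is the same as in Theorem~\ref{konproj} because the identities in Theorem~\ref{diffbar} are applied mode by mode and therefore transfer verbatim to $\mathcal{S}_n$.

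The decisive step, and the only real obstacle, is the final estimate of $\|\widehat{\textbf{G}}-\widehat{\textbf{G}}_n\|_{\ell^2}$ and $\|\widehat{\textbf{U}}_d-\widehat{\textbf{U}}_d^n\|_{\ell^2}$: these are no longer projection errors but trigonometric interpolation errors. Invoking Lemma~\ref{inter} with $\gamma_1=0$ and $\gamma_2\in\{s_2,s_3\}$ requires precisely the hypothesis $\omega=\min\{s_2,s_3\}>d/2$, and produces the interpolation constant $c_{d,0,\omega}$. Collecting all bounds, one obtains
\begin{align*}
|\partial_s j(\bar s_n,\bar\alpha_n)-\partial_s j_n(\bar s_n,\bar\alpha_n)|\\
\leq C(s_0,\alpha_1,d,\omega)\,\Bigl(\frac{n}{2}\Bigr)^{-\omega}\max\{|g|_\omega,|u_d|_\omega\},
\end{align*}
and the analogous estimate for the $\alpha$-derivative. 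Dividing by $\kappa$ gives the claimed inequality. The only novelty compared to Theorem~\ref{konproj} is thus the appearance of $d$ and $\omega$ in the final constant and the extra smoothness requirement $\omega>d/2$ dictated by Lemma~\ref{inter}. \qed
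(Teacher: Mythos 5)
Your proposal is correct and follows exactly the route the paper takes: its proof is literally "analogous to Theorem \ref{konproj} using Lemma \ref{inter}," and your sketch carries out that analogy, correctly identifying that the only changes are the substitution of the interpolation error bound for the projection error bound, the resulting constant $c_{d,0,\omega}$, and the extra hypothesis $\omega > \frac{d}{2}$ needed to invoke Lemma \ref{inter}.
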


\begin{proof}
The proof is analogous to Theorem \ref{konproj} using Lemma \ref{inter}.\qed
\end{proof}
\section{Numerical Experiments}
For our numerical experiments we consider the following specific problem 
\begin{align}
&\min~ j_n(s,\alpha) :=  J_n(s,\alpha, \mathcal{S}_n(s,\alpha))\label{func1red} \\
&= \frac{1}{2(2\pi)^d} \lVert  \mathcal{S}_n(s,\alpha) - \widehat{\textbf{U}}_{d,n} \rVert^2_{\ell^2} + 5\cdot 10^{-5}\varphi_1(s,\alpha) \nonumber \\ 
&\text{with }(s, \alpha) \in [0,0.5]\times[0,250] \text{ and } \nonumber\\
&~~~~~\varphi_1(s,\alpha) = \frac{1}{s(0.5-s)} + \frac{1}{\alpha(250-\alpha)} \nonumber.
\end{align}
We denote $\widehat{\textbf{U}}_{d,n}$ as the interpolated Fourier coefficcients of $u_d$ up to order $n$. The Fourier coefficents of $g$ are defined analogously.
Motivated on embedding arguments in \cite{AntBar17}, we choose for the parameter $s$ values between $0$ and $\frac{1}{2}$. The existence interval for the parameter $\alpha$ is choosen empirically.
A suitable choice of the scaling factor for the function $\varphi_1$ is an important aspect of the optimization problem. If the scaling factor is too large, then the optimization problem is dominated by the function $\varphi_1$. As a consequence, it is mainly optimized with respect to the function $\varphi$ and not after the denoising parameters $s$ and $\alpha$. If the scaling factor is too small, then the convergence properties get worse because of the small influence of the strong convexity constant of the function $\varphi$. In numerical experiments we observe that the runtime of the optimization problem depends on the scaling factor; it increases for a small scaling factor.\\
We solve the restricted optimization problem in MATLAB using an SQP-solver, applied on  
\begin{figure}
	\resizebox{0.9\hsize}{!}{\includegraphics{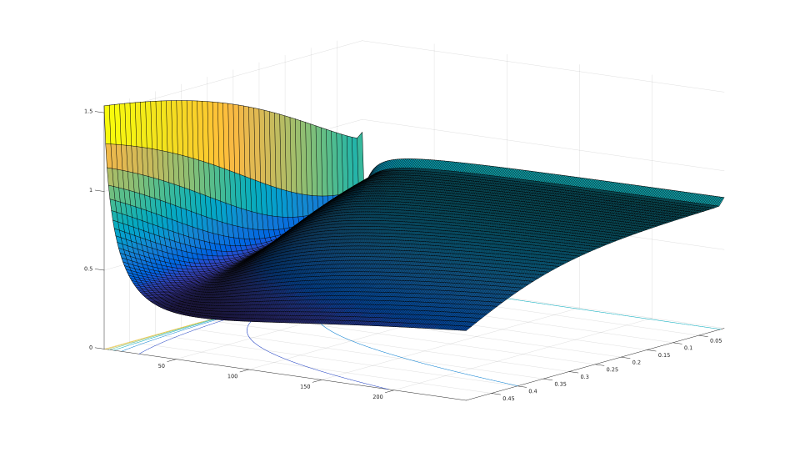}}
	\caption{Energy functional $j_n$ with function $\varphi_1$ and different choices for $s$ and $\alpha $.}
\end{figure} 
\begin{align*}
&\hspace{1cm}\min_{s,\alpha} j_n(s,\alpha)\\
&\hspace{1cm}\text{s.t.~~~} h(s, \alpha) \leq 0
\end{align*}
with $h: \R^2 \rightarrow \R^4$ and
\[\hspace{1cm}h(s, \alpha)=\begin{pmatrix}
- s \\
s - 0.5 \\
- \alpha \\
\alpha -250
\end{pmatrix}.\]
Let $h_i$ for $i =1,2,3,4$ denote the components of the function $h$.
The SQP-problem is the quadratic approximation of the associated Lagrange function 
\begin{align*}
\hspace{1cm}L(s,\alpha,\lambda) = j_n(s,\alpha) + \sum\limits_{i=1}^{4} \lambda_i h_i(s, \alpha)
\end{align*} with
\begin{align*}
&~~~~~\min_{d \in \R^2} \frac{1}{2} d^T \textbf{H}_k d + \nabla j_n(s_k,\alpha_k)^T d\\
&\text{s.t~~} \nabla h_i(s_k,\alpha_k)^T d + h_i(s_k,\alpha_k) \leq 0 ~~~i=1,...,4.
\end{align*}
The matrix  $\textbf{H}_k$  is a positive definit approximation of the Hessian matrix from the Lagrange function $L(s,\alpha,\lambda)$.
The solution $d_k$ of the quadratic program gives us for a suitable step size $\beta_k$
\begin{align*}
\hspace{1cm}\binom{s_{k+1}}{\alpha_{k+1}} = \binom{s_k}{\alpha_k} + \beta_k d_k.
\end{align*} 
Using the fact that
\begin{align*}
\hspace{1cm}\nabla^2_{s,\alpha} L(s, \alpha ,\lambda) = \nabla^2 j_n(s, \alpha)
\end{align*} 
we can argue that the approximation of the Hessian matrix $\nabla^2 j(s,\alpha)$ is positiv definit using the argumentation structure as in the proof of Lemma \ref{posdef}. This implies the well-posedness of the SQP-method. For details of this algorithm we refer to the official documentation of the software library MATLAB.\\
For our numerical examples we obtain our noisy images $g$ from $u_d$, where the additve noise is normally distributed with mean zero and standard deviation $0.15$. For the obtained results, we measure the quality of reconstructions using metrics such as the peak signal-to-noise ratio (PSNR) and structural-similarity-index (SSIM). In Figures ~\ref{boat} and ~\ref{pepper} we illustrate the optimal solution $\bar{u}$ of the discrete optimization problem (\ref{func1red}) for two test images. As expected our model correlates with the standard deviation $\sigma$ as seen in Figures \ref{alpha} and \ref{se}. A higher noise implies that the denoised image $\bar{u}$ has a higher deviation from the noisy image $g$ and a stronger smoothing. 
\begin{figure}
		\fboxsep=2mm
		\centering
		\begin{subfigure}{0.22\textwidth}
			\centering
			\mbox{
				\includegraphics[width=0.9\textwidth]{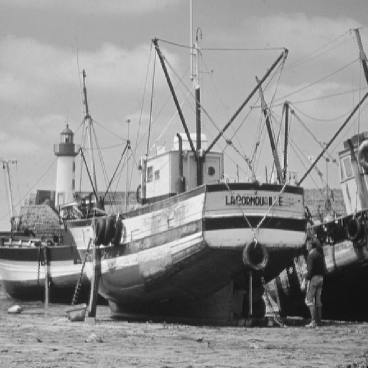}}
			\subcaption{original image $u_d$}
		\end{subfigure}
		\vspace{0.2cm}
		\begin{subfigure}{0.22\textwidth}
			\centering
			\mbox{
				\includegraphics[width=0.9\textwidth]{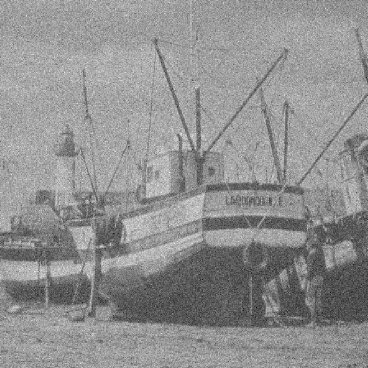}}
			\subcaption{image $g$;~ SSIM: 0.215}
		\end{subfigure}
		\begin{subfigure}{0.22\textwidth}
			\centering
			\mbox{
				\includegraphics[width=0.9\textwidth]{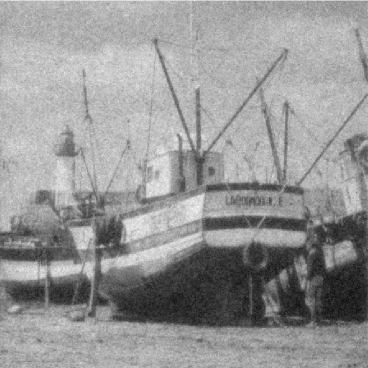}}
			\subcaption{image $u$ with $\bar{s} = 0.471, \bar{\alpha} = 45.38$;~ SSIM: 0.512}
		\end{subfigure}
		\caption{Results for the image "boat". As clearly seen, the model eliminates  the fog based on the normally distributed noise.}
		\label{boat}
	\end{figure}

\begin{figure}[h]
	\fboxsep=2mm
	\centering
	\begin{subfigure}{0.22\textwidth}
		\centering
		\mbox{
			\includegraphics[width=0.9\linewidth]{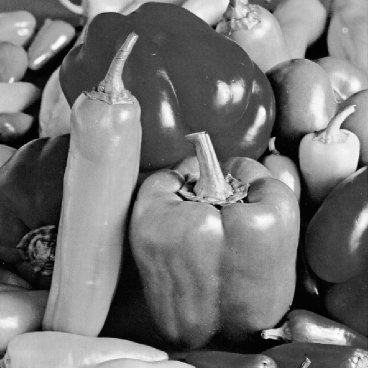}}
		\subcaption{original image $u_d$}
	\end{subfigure}
	\vspace{0.2cm}
	\begin{subfigure}{0.22\textwidth}
		\centering
		\mbox{
			\includegraphics[width=0.9\linewidth]{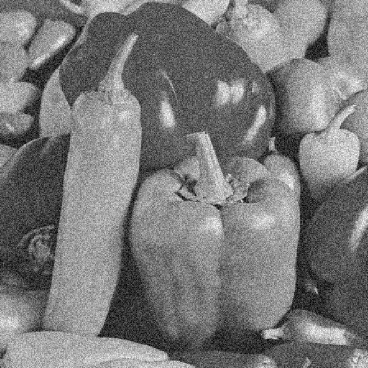}}
		\subcaption{noisy image $g$;~ SSIM: 0.174}
	\end{subfigure}
	\begin{subfigure}{0.22\textwidth}
		\centering
		\mbox{
			\includegraphics[width=0.9\linewidth]{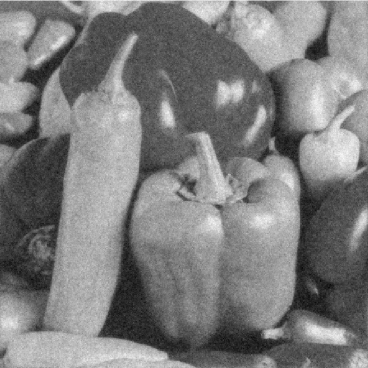}}
		\subcaption{image $u$ with $\bar{s} = 0.475, \bar{\alpha} = 48.93$;~SSIM: 0.495}
	\end{subfigure}
	\caption{In the image "peppers"{} is a significant reduction of noise while maintaining the edges.}
	\label{pepper}
\end{figure}

\begin{figure}
	\begin{center}
		\resizebox{0.5\textwidth}{!}{
			\begin{tikzpicture}
			\begin{axis}[
			xlabel={\textbf{$\sigma$ }},
			ylabel={ \textbf{$\bar{s}$}},
			xmin=0, xmax=0.3,
			ymin=0, ymax=0.5,
			xtick={0,0.05,0.1,0.15,0.2,0.25,0.3},
			ytick={0,0.1,0.2,0.3,0.4,0.5},
			legend pos=north west,
			ymajorgrids=true,
			grid style=dashed,
			]
			
			\addplot[
			color=blue,
			mark=square,
			]
			coordinates {
				(0,0.2)(0.01,0.23)(0.03,0.40)(0.05,0.44)(0.1,0.463)(0.15,0.471)(0.2,0.475)(0.25,0.477)(0.3,0.479)
			};

			\end{axis}
			\end{tikzpicture}}
	\end{center}
	\caption{We study the influence of the noise on the parameter $s$. The numerical results coincide with theoretical considerations that more noise implies a stronger smoothing of the image, i.e a higher value of the parameter $s$.}
	\label{se}
\end{figure}
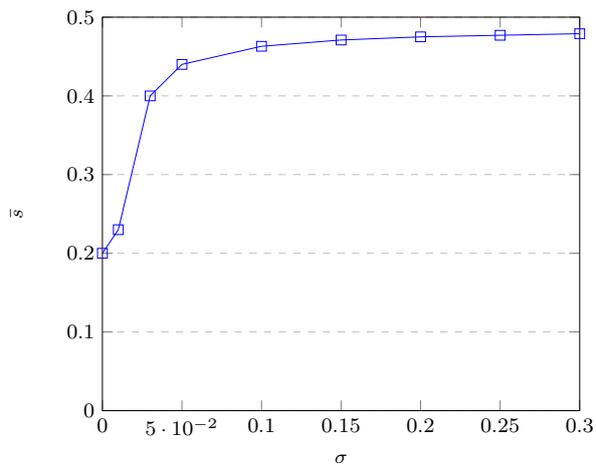

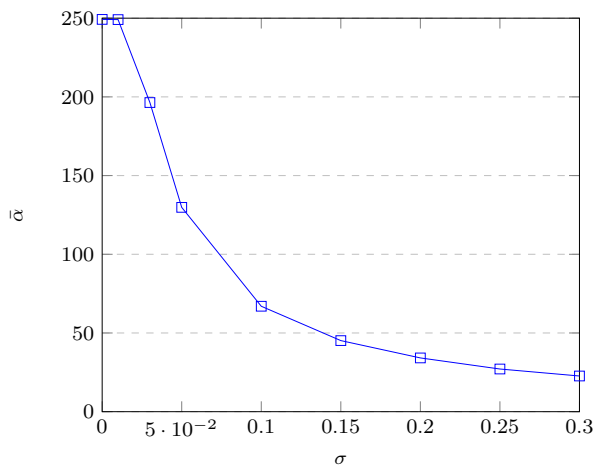
\begin{figure}
	\begin{center}
		\resizebox{0.5\textwidth}{!}{
			\begin{tikzpicture}
			\begin{axis}[
			xlabel={$\sigma$ },
			ylabel={$\bar{\alpha}$},
			xmin=0, xmax=0.3,
			ymin=0, ymax=250,
			xtick={0,0.05,0.1,0.15,0.2,0.25,0.3},
			ytick={0,50,100,150,200,250},
			legend pos=north west,
			ymajorgrids=true,
			grid style=dashed,
			]
			
			\addplot[
			color=blue,
			mark=square,
			]
			coordinates {
				(0,249.22)(0.01,249.1)(0.03,196.4)(0.05,129.8)(0.1,66.98)(0.15,45.16)(0.2,34.15)(0.25,27.11)(0.3,22.67)
			};

			\end{axis}
			\end{tikzpicture}}
	\end{center}
	\caption{ Also in the case of the parameter $\alpha$  the numerical results coincide with theoretical considerations. A higher noise has the result, that the denoised image is more far away from the noisy image, i.e a higher value of the parameter $\alpha$.}
	\label{alpha}
\end{figure}

Furthermore, we compare our model with the ROF model \cite{RudOshFat92}, which consists in minimizing
\begin{align}\label{rofmod}
\hspace{1cm} E(u) = \big| Du \big|_{\torus} + \frac{\alpha }{2}\lVert g - u \rVert^2.
\end{align}
for given $ g \in L^2(\torus, \R)$.
It can be shown, that for $\alpha > 0$ exists a unique minimizer $u \in BV(\torus) \cap L^2(\torus; \R)$.\\
The minimization of the ROF model is done with a gradient flow. The variatonal derivation of total variation  
\begin{align*}
\hspace{1cm} \nabla | Du \big|_{\torus} = {\rm div}\,(\frac{\nabla u}{| \nabla u |}).
\end{align*}
is not differentiable in $0$, so we substitute $| \nabla u(x) |$ with $\sqrt{\varepsilon^2 + | \nabla u(x) | ^2 }$  and $\varepsilon = 0.004 $.\\
For the numerical implementation and test images we refer to \cite{Pey11}. In the ROF model we test $20$ different values for the parameter $\alpha$, all in the range of $[10^{-6},0.2]$ with equidistant distance. Afterwards we choose the parameter $\alpha$, such that we have the highest peak signal-to-noise-ratio in comparison to the reference image $u_d$.

\begin{figure}[h]
	\centering
	\begin{subfigure}{0.22\textwidth}
		\centering
		\mbox{
			\includegraphics[width=0.9\linewidth]{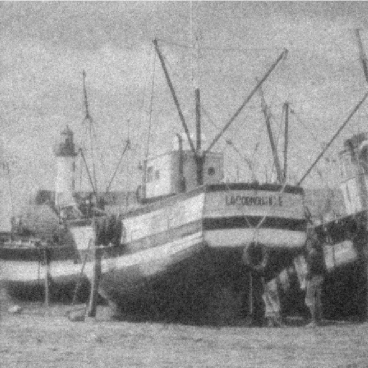}}
		\subcaption{Fractional-Laplacian-model; SSIM: 0.513}
	\end{subfigure}
	\vspace{0.2cm}
	\begin{subfigure}{0.22\textwidth}
		\centering
		\mbox{
			\includegraphics[width=0.9\linewidth]{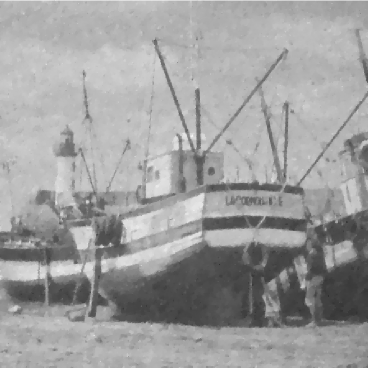}}
		\subcaption{ROF-model; SSIM: 0.735}
	\end{subfigure}
	\caption{In comparison to the fractional Laplacian model has the ROF model  smoother edges. Moreover the ROF model has a better SSIM-value.}
	\label{fig:too}
\end{figure}
\begin{figure}
\begin{center}
	\begin{tabular}{|p{1.5cm}|p{2cm}|p{2cm}|}
		\hline
		Pixels  & Fractional Laplacian  & Regularized ROF  \\ \hline 128 &  0.38s &  6.34s  
		\\ \hline 256 & 1.23 s & 22.28 s
		\\ \hline 512 & 5.9 s & 96.26 s
		\\ \hline 1024 & 25.25 s & 433.59 s
		\\ \hline 2048 & 102.1 s & 2273.59 s 
		\\ \hline
	\end{tabular}
\end{center}
\caption{Runtime comparison between the fractional Laplacian model and the ROF model. Both model show empirically a linear runtime, but the fractional Laplacian model has a reduced computing time by factors 16-22.
We used MATLAB R2015a with CPU i3-3240 and 8 GB RAM.}
\label{runtime}
\end{figure}
As in \cite{GouMor01} mentioned, the space of bounded variation $BV(\torus)$ is insufficient to describe all natural images. Therefore, we look at the image "Baboon"{} as a counterexample. Figure \ref{monkey} shows a part of the coat structure. 
A comparison between the ROF model and the Laplacian model regarding different noise levels shows that the performance of both models highly depends on the choice of the image. But we point out, that our model has a worse performance in comparison of the ROF model, as we see in Figures \ref{psnr} and \ref{ssim}. We compare the runtime of the fractional Laplacian and the ROF model in Figure ~\ref{runtime}. Our model has a significantly lower runtime with a reduction factor of 16 to 22 in time.
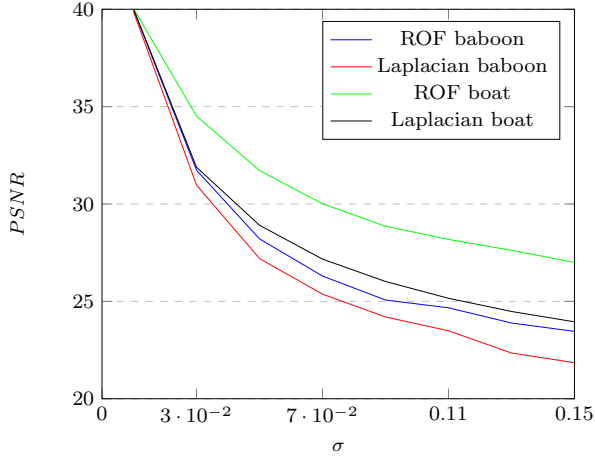
\begin{figure}
	\begin{center}
		\resizebox{0.5\textwidth}{!}{
			\begin{tikzpicture}
			\begin{axis}[
			xlabel={\textbf{$\sigma$ }},
			ylabel={ \textbf{$PSNR$}},
			xmin=0, xmax=0.15,
			ymin=20, ymax=40,
			xtick={0,0.03,0.07,0.11,0.15},
			ytick={20,25,30,35,40},
			legend pos=north west,
			ymajorgrids=true,
			grid style=dashed,
			legend pos= north east
			]
			
			\addplot[
			color=blue
			]
			coordinates {
				(0.01,39.99)(0.03,31.735)(0.05,28.21)(0.07,26.3)(0.09,25.07)(0.11,24.67)(0.13,23.89)(0.15,23.46)
			};
			\addplot[
			color=red
			]
			coordinates {
				(0.01,39.87)(0.03,30.97)(0.05,27.2)(0.07,25.37)(0.09,24.2)(0.11,23.49)(0.13,22.35)(0.15,21.85)
			};
			\addplot[
			color=green
			]
			coordinates {
				(0.01,40.04)(0.03,34.511)(0.05,31.72)(0.07,30.01)(0.09,28.86)(0.11,28.18)(0.13,27.623)(0.15,27)
			};
			\addplot[
			color=black
			]
			coordinates {
				(0.01,40.01)(0.03,31.89)(0.05,28.91)(0.07,27.17)(0.09,26.022)(0.11,25.16)(0.13,24.482)(0.15,23.95)
			};
			\addlegendentry{ROF baboon}
			\addlegendentry{Laplacian baboon}
			\addlegendentry{ROF boat}
			\addlegendentry{Laplacian boat}

			\end{axis}
			\end{tikzpicture}}
	\end{center}
	\caption{The discrepancy of PSNR depends on the choice of the image.}
	\label{psnr}
\end{figure}
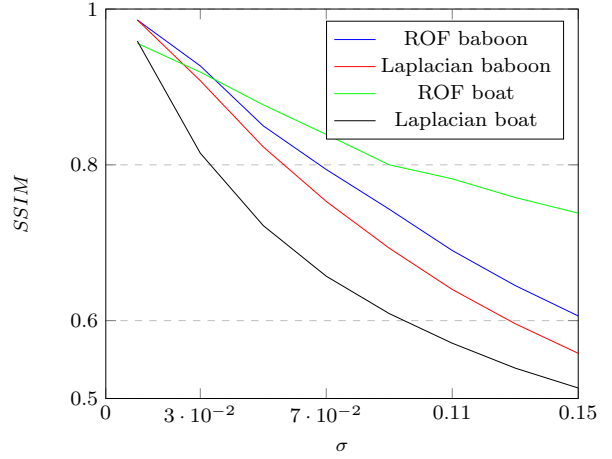
\begin{figure}
	\begin{center}
		\resizebox{0.5\textwidth}{!}{
			\begin{tikzpicture}
			\begin{axis}[
			xlabel={\textbf{$\sigma$ }},
			ylabel={ \textbf{$SSIM$}},
			xmin=0, xmax=0.15,
			ymin=0.5, ymax=1,
			xtick={0,0.03,0.07,0.11,0.15},
			ytick={0.5,0.6,0.8,1},
			legend pos=north west,
			ymajorgrids=true,
			grid style=dashed,
			legend pos= north east
			]
			
			\addplot[
			color=blue
			]
			coordinates {
				(0.01,0.986)(0.03,0.927)(0.05,0.85)(0.07,0.794)(0.09,0.743)(0.11,0.69)(0.13,0.645)(0.15,0.606)
			};
			\addplot[
			color=red
			]
			coordinates {
				(0.01,0.986)(0.03,0.908)(0.05,0.823)(0.07,0.753)(0.09,0.693)(0.11,0.64)(0.13,0.596)(0.15,0.558)
			};
			\addplot[
			color=green
			]
			coordinates {
				(0.01,0.956)(0.03,0.919)(0.05,0.877)(0.07,0.839)(0.09,0.8)(0.11,0.782)(0.13,0.758)(0.15,0.738)
			};
			\addplot[
			color=black
			]
			coordinates {
				(0.01,0.959)(0.03,0.815)(0.05,0.722)(0.07,0.657)(0.09,0.609)(0.11,0.571)(0.13,0.539)(0.15,0.5135)
			};
			\addlegendentry{ROF baboon}
			\addlegendentry{Laplacian baboon}
			\addlegendentry{ROF boat}
			\addlegendentry{Laplacian boat}
			\end{axis}
			\end{tikzpicture}}
	\end{center}
	\caption{Also for the SSIM-value the discrepancy depends on the choice of the image.}
	\label{ssim}
\end{figure}
\begin{figure}[h]
	\centering
	\begin{subfigure}{0.22\textwidth}
		\centering
		\mbox{
			\includegraphics[width=0.9\linewidth]{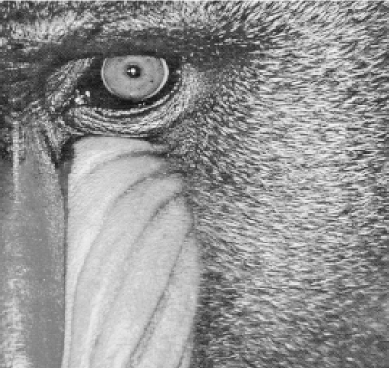}}
		\subcaption{original image $u_d$}
	\end{subfigure}
	\vspace{0.5cm}
	\begin{subfigure}{0.22\textwidth}
		\centering
		\mbox{
			\includegraphics[width=0.9\linewidth]{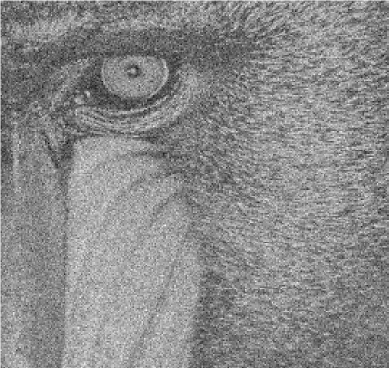}}
		\subcaption{noisy image $g$}
	\end{subfigure}
	\begin{subfigure}{0.22\textwidth}
		\centering
		\mbox{
			\includegraphics[width=0.9\linewidth]{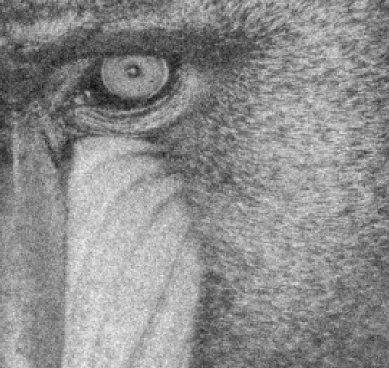}}
		\subcaption{Fractional-Laplacian-model; SSIM: 0.552}
	\end{subfigure}
	\begin{subfigure}{0.22\textwidth}
		\centering
		\mbox{
			\includegraphics[width=0.9\linewidth]{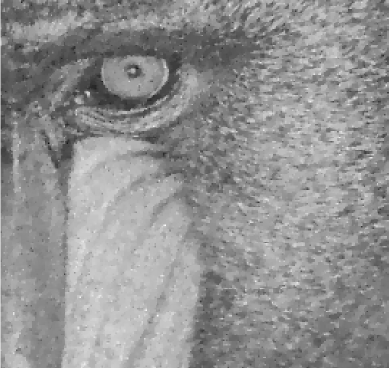}}
		\subcaption{ROF-model; SSIM: 0.604}
	\end{subfigure}
	\caption{Detail of the image "Baboon". The results are comparable.}
	\label{monkey}
\end{figure}
\section{Fractional operators in image decomposition}
In the following we derive a novel approach to decompose an image using fractional differential operators.
Based on the idea to decompose an image in a high and low frequency part we consider the functional
\begin{align}
\hspace{0.5cm}I(u,v) &= \nonumber
\frac{1}{2} \lVert\frak{\frac{s_1}{2}}u\rVert^2  + \frac{\alpha}{2} \lVert u + v - g\rVert^2 + \\
&\hspace{0.5cm}\frac{\beta}{2}  \lVert R_{\frac{s_2}{2}}(v) \rVert ^2 \label{thirdfunc}
\end{align}
with $s_1 \geq 0$ and $s_2 \leq 0$.
\subsection{Existence of a solution and solution operators}
The following theorem is the main result of this section.
\begin{theorem}\label{existence}
	For $g \in \ltorus$ we define 
	\begin{align*}
	~~~~~~~~~~\tilde{g} = g - \frac{1}{|\torus|} \int_{\torus}^{} g~dx.
	\end{align*} 
	Then exist $\tilde{u} \in \roomhszero{s_1}$ and $v \in \dot{H}^{s_2}(\torus;\C) \cap \ltoruszero$, such that the solution pair $(\tilde{u},v)$ minimizes (\ref{thirdfunc}).
	Moreover, the solutions  $u := \tilde{u} + \\\frac{1}{|\torus|} \int_{\torus}^{} g$ and  $v$ fulfill  the identity 
	\begin{align*}
	~~~~~\int_{\torus}^{} (u(x) - g(x))~dx = \int_{\torus}^{} v(x)~ dx = 0.
	\end{align*}
	Furthermore, the solution pair $(u,v)$ is unique.
\end{theorem}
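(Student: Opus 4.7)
The plan is to reduce the problem to a countable family of decoupled two-variable quadratic minimizations via Fourier series, producing the minimizer explicitly. After the substitution $u = \tilde u + c$ with $c = |\torus|^{-1}\int_\torus g\,dx$ and $\tilde u \in \roomhszero{s_1}$, the fidelity term becomes $\tfrac{\alpha}{2}\lVert \tilde u + v - \tilde g\rVert^2$, while the leading term either is unchanged (since $\frak{s_1/2}$ annihilates constants for $s_1>0$) or, in the borderline case $s_1=0$, differs by an additive constant independent of $\tilde u$ because $\tilde u$ has zero mean. Hence the minimization is equivalent to the corresponding problem over $\roomhszero{s_1}\times(\roomhshomo{s_2}\cap\ltoruszero)$ with $g$ replaced by $\tilde g$. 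By Parseval and the spectral definitions of $\frak{s_1/2}$ and $R_{s_2/2}$, the reduced functional decouples over $k\in\Z^d\setminus\{0\}$ into the mode-wise quadratic
\begin{equation*}
f_k(a,b) = \tfrac{1}{2}|k|^{2s_1}|a|^2 + \tfrac{\alpha}{2}|a+b-\hat{\tilde g}_k|^2 + \tfrac{\beta}{2}|k|^{2s_2}|b|^2
\end{equation*}
in the variables $(a,b)=(\hat{\tilde u}_k,\hat v_k)\in\C^2$.

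For each $k\neq 0$, the Hessian of $f_k$ has determinant $D_k := \alpha|k|^{2s_1} + \alpha\beta|k|^{2s_2} + \beta|k|^{2(s_1+s_2)}>0$, so $f_k$ is strictly convex and the linear system $\nabla f_k = 0$ admits the unique solution
\begin{equation*}
\hat{\tilde u}_k = \frac{\alpha\beta|k|^{2s_2}}{D_k}\,\hat{\tilde g}_k, \qquad \hat v_k = \frac{\alpha|k|^{2s_1}}{D_k}\,\hat{\tilde g}_k.
\end{equation*}
These Fourier coefficients define candidate functions $\tilde u$ and $v$, and strict convexity mode-by-mode delivers uniqueness of $(\tilde u,v)$ and hence of $(u,v)$.

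The step that requires the most care is verifying that this candidate pair actually lies in the claimed Sobolev spaces; one has to exploit the competition between the three terms of $D_k$. Combining the elementary lower bounds $D_k \geq \alpha|k|^{2s_1}$, $D_k \geq \alpha\beta|k|^{2s_2}$ and $D_k \geq \beta|k|^{2(s_1+s_2)}$, I would derive termwise estimates such as
\begin{equation*}
|k|^{2s_1}|\hat{\tilde u}_k|^2 \leq \alpha^2|k|^{-2s_1}|\hat{\tilde g}_k|^2, \qquad |k|^{2s_2}|\hat v_k|^2 \leq |k|^{2s_2}|\hat{\tilde g}_k|^2,
\end{equation*}
together with analogous bounds $|\hat{\tilde u}_k|^2 \leq |\hat{\tilde g}_k|^2$ and $|\hat v_k|^2 \leq |\hat{\tilde g}_k|^2$. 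Since $g\in\ltorus$, $s_1\geq 0$ and $s_2\leq 0$, every right-hand side is summable over $k\in\Z^d\setminus\{0\}$, which yields $\tilde u\in\roomhszero{s_1}$ and $v\in\roomhshomo{s_2}\cap\ltoruszero$. The integral identity is then immediate: $\tilde u$ and $v$ have zero mean by construction, so $\int_\torus v\,dx = 0$ and $\int_\torus (u - g)\,dx = c|\torus| - \int_\torus g\,dx = 0$.
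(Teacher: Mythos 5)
Your proposal is correct, but it takes a genuinely different route from the paper. The paper proves existence by the direct method: it extracts an infimum sequence, obtains uniform bounds from the three terms of $I$ (using $\lVert u_n\rVert^2\le\lVert\frak{\frac{s_1}{2}}u_n\rVert^2$ on zero-mean functions), passes to weak limits in $\roomhszero{s_1}$, $\ltorus$ and $\roomhshomo{s_2}$, checks the zero-mean property of the limit with the test function $w\equiv 1$, invokes Rellich's theorem and weak lower semicontinuity, and only afterwards establishes uniqueness (and the explicit formula) by writing the Euler--Lagrange equations mode by mode in Fourier space. You instead skip the compactness machinery entirely: you observe that, after the mean-value reduction, $I$ decouples by Parseval into the strictly convex quadratics $f_k$, solve each $2\times 2$ linear system explicitly (your formulas agree with the paper's solution operators $\mathcal{S}_1,\mathcal{S}_2$ in (\ref{op1})--(\ref{op2}) after multiplying numerator and denominator by $|k|^{2s_1}$, and your determinant $D_k$ is the correct Hessian determinant), and then verify by the elementary bounds $D_k\ge\alpha|k|^{2s_1}$, $D_k\ge\alpha\beta|k|^{2s_2}$, $D_k\ge\beta|k|^{2(s_1+s_2)}$ that the candidate lies in $\roomhszero{s_1}\times(\roomhshomo{s_2}\cap\ltoruszero)$, so that termwise minimality gives global minimality and mode-wise strict convexity gives uniqueness in one stroke. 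This constructive argument is more elementary and immediately yields the explicit solution used later in the paper, and your treatment of the mean-value/constant mode (including the borderline $s_1=0$) is in fact more careful than the paper's brief remark $\hat g_0=\hat u_0$; what the paper's direct-method proof buys in exchange is a template that does not rely on the functional being a decoupled quadratic, and whose weak-convergence ingredients are reused in the subsequent limiting results (Lemma \ref{konalpha} and its $\beta\to\infty$ analogue).
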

\begin{proof}
	Since $I$ is bounded from below, there exists an infimum sequence $(u_n,v_n)$ with
	\begin{align*}
	&~~~~~\lVert\frak{\frac{s_1}{2}}u_n\rVert^2 \leq C, \\
	&~~~~~\lVert u_n + v_n - g \rVert ^2 \leq C,\\
	&~~~~~\lVert R_{\frac{s_2}{2}}(v_n) \rVert ^2 \leq C.
	\end{align*}
	The constant $C > 0$ is independent of $n$.\\
	The fact $ \lVert u_n \rVert^2 \leq 	\lVert\frak{\frac{s_1}{2}}u_n\rVert^2  \leq C $ yields the uniform boundedness of $u_n$ in $\ltorus$ and therefore also the uniform boundedness of $v_n$ in \\$\ltorus$. As a matter of fact, we have
	\begin{align*}
	~~~~~u_n &\rightharpoonup \tilde{u} \text{ in $\roomhszero{s_1}$,}\\
	~~~~~v_n &\rightharpoonup \tilde{v} \text{ in $\ltorus$,}\\
	~~~~~v_n &\rightharpoonup v \text{ in $\dot{H}^{s_2}(\torus)$}.
	\end{align*}
	Using the test function $w \equiv 1$  and the weak $L^2$-convergence, we obtain 
	\begin{align*}
	\hspace{1cm }&\int_{\torus}^{} \tilde{v}(x)~dx = (\tilde{v},w)=  \\
	\hspace{1cm}\lim\limits_{n\rightarrow \infty}&(v_n,w) = \int_{\torus}^{} v_n(x)~dx = 0.
	\end{align*}
	The fact $\tilde{v} \in \roomhshomo{s_2}$ allows us to identify $v$ with $\tilde{v}$.
	Rellich's theorem (Theorem \ref{rellich}) implies the strong convergence of a subsequence with $u_n \rightarrow \tilde{u} \in \ltoruszero$.\\
	The weak lower semicontinuity of $I$ yields 
	\begin{align*}
	\hspace{1cm} I(\tilde{u},v) \leq \liminf_{n\rightarrow \infty} I(u_n,v_n) = \inf_{w,z}I(w,z),
	\end{align*}
	which implies the existence of a solution.
	To prove the uniqueness of the solution, we use the isometry property between 
	$\ltorus$ and $\ell^2(\Z^d)$. With $\hat{g}_0 = \hat{u}_0$ we obtain 
	\begin{align*}
	\hspace{1cm}&I(\hat{\textbf{U}}, \hat{\textbf{V}}) = \sum\limits_{k \in \Z^d \setminus \{0\}}^{} \frac{1}{2}|k|^{2s_1} |\hat{u}_k|^2  \\ 
	\hspace{1cm}&+ \frac{\alpha}{2} |\hat{u}_k + \hat{v}_k - \hat{g}_k|^2 + \frac{\beta}{2}|k|^{2s_2}|\hat{v}_k|^2.
	\end{align*}
	Because of the uniqueness of Fourier coefficients we can differentiate for arbitrary 
	$k \in \Z^d \setminus \{0\}$  regarding  $\hat{v}_k$ and $\hat{u}_k$. In the minimum of  $I(\hat{U}, \hat{V})$ we get
	\begin{align}
	\hspace{1cm }0 &= \alpha(\hat{u}_k + \hat{v}_k - \hat{g}_k) + \beta |k|^{2s_2}\hat{v}_k \label{der_v},\\
	\hspace{1cm}0 &= |k|^{2s_1} \hat{u}_k + \alpha(\hat{u}_k + \hat{v}_k - \hat{g}_k)  \label{der_u}.
	\end{align}
	for arbitrary $k \in \Z^d \setminus \{0\}$.
	Combining (\ref{der_u}) and (\ref{der_v}) imply
	\begin{align}
	\hspace{1cm} \hat{u}_k = \frac{\beta |k|^{2s_2}}{|k|^{2s_1}} \hat{v}_k ~\text{for } k \in \Z^d \setminus \{0\}. \label{another_u}
	\end{align} 
	Substituting (\ref{another_u}) in (\ref{der_v}), we obtain
	\begin{align*}
	\hspace{1cm}&\alpha(\frac{\beta |k|^{2s_2}}{|k|^{2s_1}} \hat{v}_k + \hat{v}_k - \hat{g}_k) + \beta |k|^{2s_2}\hat{v}_k = 0\\
	\hspace{1cm}&\Leftrightarrow ~~\hat{v}_k = \frac{\alpha \hat{g}_k}{\alpha\beta|k|^{2(s_2-s_1)} + \alpha+ \beta |k|^{2s_2}},
	\end{align*}
	which implies uniquness.\qed
\end{proof}
\begin{definition}
	 Let $Y := [s_0, s_3] \times [\alpha_0, \alpha_1] \times [s_4, s_5] \\
	 \times [\beta_0, \beta_1]$ with $s_3 > s_0 \geq 0$ and $-1  \leq s_4 < s_5 \leq 0$. Define the solution operators $\mathcal{S}_1, \mathcal{S}_2: Y \rightarrow \ell^2(\Z^d)$ as
	\begin{align}
	&~~~~~~~~\mathcal{S}_1(s_1,\alpha,s_2,\beta)= \nonumber\\
	&\hspace{0.8cm}\Bigg [ \frac{\alpha\beta |k|^{2(s_2-s_1)} \hat{g}_k}{\alpha\beta|k|^{2(s_2-s_1)} + \alpha + \beta |k|^{2s_2}} \Bigg ]_{k \in \Z^d \setminus \{0\}} \label{op1}
	\end{align} 
	and
	\begin{align}
	&~~~~~~~~\mathcal{S}_2(s_1,\alpha,s_2,\beta) = \nonumber\\ 
	&\hspace{0.8cm}\Biggl[ \frac{\alpha \hat{g}_k}{\alpha\beta|k|^{2(s_2-s_1)} + \alpha + \beta |k|^{2s_2}}\Biggl]_{k \in \Z^d\setminus\{0\}}. \label{op2}
	\end{align}
\end{definition}
\subsection{Relation to other image models}
In the following we study the behavior in the limiting case when the regularization parameters $\alpha$ and $\beta$ tend to infinity.
\begin{lemma}\label{konalpha}
	Let $\beta > 0$ be and  $g \in \ltoruszero$. With $\alpha_n$ we denote an increasing, positive sequence, such that $\lim\limits_{n \rightarrow} \alpha_n = \infty$. The pair $(u_{\alpha_n}, v_{\alpha_n})$ is the unique minimum (\ref{thirdfunc}) for the specific $\alpha_n$. \\
	The sequence   $(u_{\alpha_n}, v_{\alpha_n})$ is bounded and a subsequence converges weakly  to $(u_0,g-u_0)$, with $u_0$ is the unique minimizer of 
	\begin{align}
	\hspace{0.2cm} I(u) := \frac{1}{2} \lVert\frak{\frac{s_1}{2}}u\rVert^2 + \frac{\beta}{2}  \lVert R_{\frac{s_2}{2}}(u-g) \rVert ^2 \label{exactfunc1}.
	\end{align}
\end{lemma}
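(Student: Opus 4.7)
The plan is to run a standard Gamma-convergence-flavored argument: obtain uniform bounds, extract weak limits, use the divergent penalty to force the hard constraint $u+v=g$ in the limit, and identify the weak limit as the minimizer of the reduced functional $I$.

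First I would establish uniform bounds. For any fixed competitor, say the admissible pair $(u_0, g-u_0)$ where $u_0$ minimizes (\ref{exactfunc1}), the middle term vanishes, so
\begin{align*}
\frac{1}{2}\norml{\frak{s_1/2}u_{\alpha_n}}^2 &+ \frac{\alpha_n}{2}\norml{u_{\alpha_n}+v_{\alpha_n}-g}^2 \\
&+ \frac{\beta}{2}\norml{R_{s_2/2}v_{\alpha_n}}^2 \leq I(u_0).
\end{align*}
This immediately yields $\alpha_n$-uniform bounds on $u_{\alpha_n}$ in $\roomhszero{s_1}$, on $v_{\alpha_n}$ in $\roomhshomo{s_2}$, and (after multiplying through) the crucial estimate $\norml{u_{\alpha_n}+v_{\alpha_n}-g}^2 \leq 2 I(u_0)/\alpha_n \to 0$.

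Next I would extract, along a subsequence, weak limits $u_{\alpha_n} \rightharpoonup u^\ast$ in $\roomhszero{s_1}$ and $v_{\alpha_n} \rightharpoonup v^\ast$ in $\roomhshomo{s_2}$. By Rellich (Theorem \ref{rellich}), $u_{\alpha_n}\to u^\ast$ strongly in $\ltoruszero$; combined with the penalty estimate $u_{\alpha_n}+v_{\alpha_n} \to g$ in $\ltorus$, this forces $v_{\alpha_n}\to g-u^\ast$ strongly in $L^2$, so by uniqueness of limits $v^\ast = g - u^\ast$ (noting that the $L^2$-limit coincides with the $\dot H^{s_2}$-weak limit since $L^2\hookrightarrow \dot H^{s_2}$ for $s_2\leq 0$).

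To identify $u^\ast$ with $u_0$, I would test against any fixed $w\in \roomhszero{s_1}$ via the admissible pair $(w, g-w)$:
\begin{align*}
\frac{1}{2}\norml{\frak{s_1/2}u_{\alpha_n}}^2 + \frac{\beta}{2}\norml{R_{s_2/2}v_{\alpha_n}}^2 \leq I(w),
\end{align*}
where the penalty term was dropped on the left. Using weak lower semicontinuity of the two convex norm-squared terms under weak convergence in $\roomhszero{s_1}$ and $\roomhshomo{s_2}$, and substituting $v^\ast = g - u^\ast$, one obtains $I(u^\ast)\leq I(w)$ for every $w$. Thus $u^\ast$ is a minimizer of (\ref{exactfunc1}); uniqueness follows from strict convexity of $I$, since both $\norml{\frak{s_1/2}\cdot}^2$ and $\norml{R_{s_2/2}(\cdot-g)}^2$ are convex with the latter strictly convex in the frequency $k=0$ mode of $u-g$ (handled by the mean-zero normalization) and in all other modes by the quadratic form. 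Hence $u^\ast = u_0$ and the entire sequence (not just a subsequence) converges, which in turn pins down $(u_{\alpha_n},v_{\alpha_n})\rightharpoonup (u_0,g-u_0)$.

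The main obstacle is the bookkeeping of function spaces: $v_{\alpha_n}$ is only bounded in $\roomhshomo{s_2}$ with $s_2<0$, a space larger than $L^2$, so one cannot directly extract a weak $L^2$-limit for $v_{\alpha_n}$. The fix is precisely the penalty estimate $\norml{u_{\alpha_n}+v_{\alpha_n}-g}\to 0$, which upgrades convergence of $v_{\alpha_n}$ to strong $L^2$-convergence through the compactly converging $u_{\alpha_n}$, consistent with the weak $\dot H^{s_2}$-limit. A secondary technicality is to verify that the mean-zero constraint is preserved in the limit, which follows from the strong $L^2$-convergence by integrating against the constant test function, as done in Theorem \ref{existence}.
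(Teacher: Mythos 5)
Your proposal is correct and follows essentially the same route as the paper: compare the energy against a fixed admissible competitor to get $\alpha_n$-uniform bounds and the penalty estimate $\lVert u_{\alpha_n}+v_{\alpha_n}-g\rVert^2\leq C/\alpha_n$, pass to weak limits, and use weak lower semicontinuity against pairs $(w,g-w)$ to identify the limit as the minimizer of (\ref{exactfunc1}). The only differences are cosmetic: the paper uses the competitor $(0,g)$ instead of $(u_0,g-u_0)$ (which avoids presupposing that the reduced functional has a minimizer), and your Rellich-based upgrade of $v_{\alpha_n}$ to strong $L^2$-convergence is a slightly more careful justification of the step the paper states tersely.
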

\begin{proof}
	The proof follows \cite{AujGilChaOsh05}.
	The existence of a solution $(u_{\alpha_n}, v_{\alpha_n})$ for the specific $\alpha_n$ follows from  (\ref{existence}). Moreover, we have
	\begin{align*}
	\hspace{1cm } I(u_{\alpha_n}, v_{\alpha_n}) \leq I(0,g) = \frac{\beta}{2} \lVert R_{\frac{s_2}{2}}(g) \rVert^2,
	\end{align*}
	which implies the boundedness of the sequence \\$(u_{\alpha_n}, v_{\alpha_n})$ independent of  $n\in \N$. Furthermore, this implies the weak convergence of a subsequence to $u_0$ in $\roomhszero{s_1}$ respectivly $v_0$ in $\roomhshomo{s_2} \cap \ltoruszero$.
	The estimate 
	\begin{align*}
	\lVert u_n + v_n - g\rVert^2 \leq \frac{\beta}{2\alpha_n} \lVert R_{\frac{s_2}{2}}(g) \rVert ^2 
	\end{align*} 
	for all $n \in \N$ guarantees 
	\begin{align*}
	\hspace{1cm} \lVert u_0  + v_0 - g\rVert^2 &= 0,\\
	\hspace{1cm} u_0(x) + v_0(x) &= g(x) ~~\text{a. e.} 
	\end{align*}
	in the limiting case $n  \rightarrow \infty$.
	For arbitrary  $u \in \roomhszero{s_1}$ we have
	\begin{align*}
	\hspace{0.5cm}&\frac{1}{2} \lVert\frak{\frac{s_1}{2}}u\rVert^2 + \frac{\alpha_n}{2} \lVert u + (g-u) - g \rVert^2 \\
	&+ \frac{\beta}{2}  \lVert R_{\frac{s_2}{2}}(u-g) \rVert ^2\\
	&\geq \frac{1}{2} \lVert\frak{\frac{s_1}{2}}u_{\alpha_n}\rVert^2 + \frac{\alpha_n}{2} \lVert u_{\alpha_n} + v_{\alpha_n} - g\rVert^2 \\
	&+ \frac{\beta}{2}  \lVert R_{\frac{s_2}{2}}(v_{\alpha_n}) \rVert ^2\\
	& \geq \frac{1}{2} \lVert\frak{\frac{s_1}{2}}u_{\alpha_n}\rVert^2 +  \frac{\beta}{2}  \lVert R_{\frac{s_2}{2}}(v_{\alpha_n}) \rVert ^2 
	\end{align*}
	for all $ n \in \N$. The weak lower semicontinuity of the functional yields
	\begin{align*}
	&\hspace{1cm}\frac{1}{2} \lVert\frak{\frac{s_1}{2}}u_0 \rVert^2 +  \frac{\beta}{2}  \lVert R_{\frac{s_2}{2}}(g - u_0) \rVert ^2 \\
	&\leq \liminf_{n \rightarrow \infty} \frac{1}{2} \lVert\frak{\frac{s_1}{2}}u_{\alpha_n}\rVert^2 +  \frac{\beta}{2}  \lVert R_{\frac{s_2}{2}}(v_{\alpha_n}) \rVert ^2. 
	\end{align*}
	From this we conclude that $(u_0, g - u_0)$ is the unique minimizer of  (\ref{exactfunc1}).\qed\\
\end{proof}
\begin{figure}[h]
	\fboxsep=2mm
	\centering
	\begin{subfigure}{0.22\textwidth}
		\centering
		\mbox{
			\includegraphics[width=0.9\linewidth]{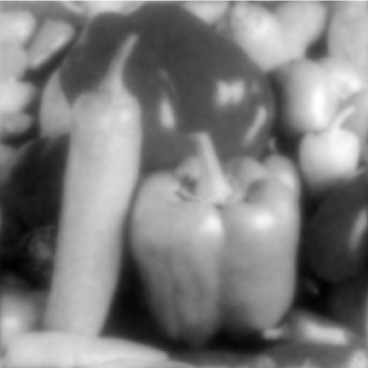}}
		\subcaption{$u$ with $\alpha = 0.001$}
	\end{subfigure}
	\vspace{1cm}
	\begin{subfigure}{0.22\textwidth}
		\centering
		\mbox{
			\includegraphics[width=0.9\linewidth]{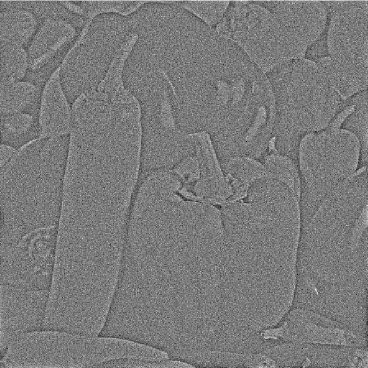}}
		\subcaption{$v$ with $\alpha = 0.001$}
	\end{subfigure}
	\begin{subfigure}{0.22\textwidth}
		\centering
		\mbox{
			\includegraphics[width=0.9\linewidth]{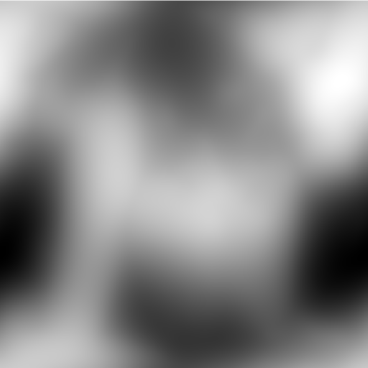}}
		\subcaption{$u$ with $\alpha = 1000$}
	\end{subfigure}
	\begin{subfigure}{0.22\textwidth}
		\centering
		\mbox{
			\includegraphics[width=0.9\linewidth]{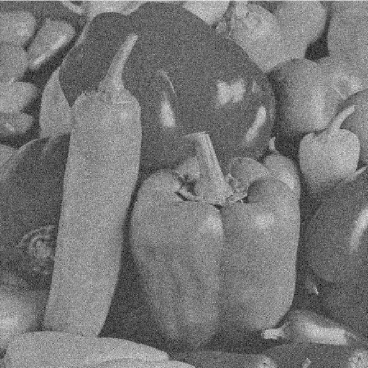}}
		\subcaption{$v$ with $\alpha = 1000$}
	\end{subfigure}
	\caption{Image components with fixed $s_1=0.2, s_2 = -1, \beta =1$ and different choices for parameter $\alpha$ for the noise-free image. The choice of the parameter $\alpha$ has a strong impact on the decomposition.}
	\label{changealpha}
\end{figure}
We can prove a similiar result in the limiting case when $\beta$ tends to infinity.
\begin{lemma}
	Let $ g \in \ltorus$ be and  $\alpha > 0$. With $\beta_n$ we denote an increasing, positive sequence, such that $\lim\limits_{n \rightarrow} \beta_n = \infty$. The pair is the unique minimum $(u_{\beta_n}, v_{\beta_n})$ for this specific $\beta_n$. The sequence  $(u_{\beta_n}, v_{\beta_n})$ is bounded and a subsequence converges weakly to $(u_0,0)$, with $u_0$ is minimizer of (\ref{funcbartels}).
\end{lemma}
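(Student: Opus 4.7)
The plan is to mirror the proof of Lemma \ref{konalpha}, swapping the roles of $\alpha$ and $\beta$ and using a test pair in which the Riesz term vanishes rather than the fidelity term. Testing the minimizer against $(0,0)$ yields the uniform bound $I(u_{\beta_n},v_{\beta_n}) \leq I(0,0) = \tfrac{\alpha}{2}\lVert g\rVert^{2}$. This immediately bounds $\lVert \frak{\frac{s_1}{2}} u_{\beta_n}\rVert$, $\lVert u_{\beta_n}+v_{\beta_n}-g\rVert$, and $\beta_n \lVert R_{\frac{s_2}{2}}(v_{\beta_n})\rVert^{2}$ independently of $n$; from the last of these, $\lVert R_{\frac{s_2}{2}}(v_{\beta_n})\rVert \to 0$, i.e., $v_{\beta_n} \to 0$ strongly in $\roomhshomo{s_2}$. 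A Poincar\'{e}-type estimate on the mean-zero space gives $u_{\beta_n}$ bounded in $L^{2}$, and the fidelity bound then delivers an $L^{2}$-bound on $v_{\beta_n}$ as well.

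By weak compactness I extract a subsequence with $u_{\beta_n} \rightharpoonup u_{0}$ in $\roomhszero{s_1}$ and $v_{\beta_n} \rightharpoonup \tilde{v}$ in $\ltoruszero$. The key identification $\tilde{v}=0$ relies on the continuous inclusion $\ltoruszero \hookrightarrow \roomhshomo{s_2}$ (valid since $s_2 \leq 0$): weak convergence in $L^{2}$ transfers to weak convergence in $\roomhshomo{s_2}$, and uniqueness of limits combined with the strong vanishing already shown forces $\tilde{v}=0$. To identify $u_{0}$, I pick an arbitrary $u\in \roomhszero{s_1}$ and exploit $I(u,0) \geq I(u_{\beta_n}, v_{\beta_n})$, which is legitimate because $(u,0)$ is admissible and contributes nothing to the Riesz term. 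Dropping the nonnegative Riesz summand from the right-hand side yields
\begin{equation*}
\frac{1}{2}\lVert \frak{\frac{s_1}{2}} u\rVert^{2} + \frac{\alpha}{2}\lVert u-g\rVert^{2} \geq \frac{1}{2}\lVert \frak{\frac{s_1}{2}} u_{\beta_n}\rVert^{2} + \frac{\alpha}{2}\lVert u_{\beta_n}+v_{\beta_n}-g\rVert^{2}.
\end{equation*}
Passing to the $\liminf$, using weak lower semicontinuity together with $u_{\beta_n}+v_{\beta_n}-g \rightharpoonup u_{0}-g$ in $L^{2}$, recovers $E(u) \geq E(u_{0})$ for the functional $E$ in (\ref{funcbartels}), so $u_{0}$ is the asserted minimizer.

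The only genuinely delicate ingredient is the identification $\tilde{v}=0$: one has to check continuity of the embedding $\ltoruszero \hookrightarrow \roomhshomo{s_2}$ so that the $L^{2}$-weak limit and the $\roomhshomo{s_2}$-strong limit coincide. Everything else — the uniform energy bound, extraction of a weakly convergent subsequence, and the liminf comparison — is a routine adaptation of the template used for Lemma \ref{konalpha} with only the obvious substitutions.
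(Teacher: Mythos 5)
Your proposal is correct and takes essentially the same route as the paper: the energy comparison $I(u_{\beta_n},v_{\beta_n})\leq I(0,0)=\tfrac{\alpha}{2}\lVert g\rVert^2$, the strong vanishing of $v_{\beta_n}$ in $\roomhshomo{s_2}$, the identification of the weak $L^2$ limit of $v_{\beta_n}$ as $0$, and the liminf comparison of $I(u,0)$ against $I(u_{\beta_n},v_{\beta_n})$ via weak lower semicontinuity. Your "delicate" step is also the paper's: phrasing it through the continuous embedding $\ltoruszero \hookrightarrow \roomhshomo{s_2}$ is just an abstract restatement of the paper's computation testing $v_{\beta_n}$ against $R_{s_2}(w)$ for $w\in\ltoruszero$.
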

\begin{proof}
	Without loss of generality we assume, that $\int_{\torus}^{} g(x)~dx = 0$. The existence of a solution \\$(u_{\beta_n}, v_{\beta_n})$ follows directly from (\ref{existence}). This yields
	\begin{align*}
	\hspace{1cm}I(u_{\beta_n}, v_{\beta_n}) \leq I(0,0) = \frac{\alpha }{2} \lVert g \rVert^2
	\end{align*}
	for all $n \in \N$. As a consequence we have the boundedness of the sequence $(u_{\beta_n}, v_{\beta_n})$, which implies the existence of weakly convergent subsequences to 
	$u_0$ in $\roomhszero{s_1}$ respectively $v_0$ in $\roomhshomo{s_2} \cap \ltoruszero$.
	We obtain
	\begin{align*}
	\hspace{1cm}\lVert R_{\frac{s_2}{2}}(v_{\beta_n})\rVert^2 \leq \frac{\alpha}{2\beta_n} \lVert g \rVert^2 = I(0,0)~\rightarrow 0 
	\end{align*}
	for $n \rightarrow \infty$ and therefore
	\begin{align*}
	\hspace{1cm} v_{\beta_n} \rightarrow 0 = v ~~\text{in~} \roomhshomo{s_2},
	\end{align*}
	i.e. $\lim\limits_{n \rightarrow \infty} \sum\limits_{k \in \Z^d \setminus\{0\}}^{} |k|^{2s_2}|\hat{v}_{\beta_n,k}|^2 = 0$.\\
	To prove the weak convergence of $v_{\beta_n}$ to 0 in \\$\ltoruszero$, we choose an arbitrary function $w \in \ltoruszero$.\\
	The function $R_{s_2}(w) = \frac{1}{(2\pi)^d} \sum\limits_{ k \in \Z^d \setminus \{0\}}^{} |k|^{2s_2} \hat{w}_k \varphi^k $ as an element in $\ltoruszero$ is well-defined and we obtain in the limiting case $n \rightarrow \infty$
	\begin{align*}
	&\hspace{0.5cm}(v_0, R_{s_2}(w))_{\ltoruszero} \leftarrow (v_{\beta_n}, R_{s_2}(w))_{\ltoruszero} \\
	&\hspace{1cm}= \frac{1}{(2\pi)^d} \sum\limits_{k \in \Z^d\setminus\{0\}}^{} |k|^{2s_2}\hat{v}_{\beta_n,k}\overline{\hat{w}_k} \\
	&\hspace{1cm}= \frac{1}{(2 \pi)^d}(v_{\beta_n}, w)_{\roomhshomo{s_2}} \rightarrow 0
	\end{align*}
	and therefore $v_0 \rightharpoonup  0 $ in $\ltoruszero$.\\
	Using the minimizer property of the sequence \\
	$(u_{\beta_n}, v_{\beta_n})$ we have for arbitrary $u \in \roomhszero{s_1}$ 
	\begin{align*}
	&\frac{1}{2} \lVert\frak{\frac{s_1}{2}}u\rVert^2 + \frac{\alpha}{2} \lVert u  - g \rVert^2 = \frac{1}{2} \lVert\frak{\frac{s_1}{2}}u\rVert^2 \\
	&\hspace{1cm}+ \frac{\alpha}{2} \lVert u + 0  - g \rVert^2 + \frac{\beta_n}{2}\lVert R_{\frac{s_2}{2}}(0) \rVert ^2\\
	&\hspace{1cm}\geq \frac{1}{2} \lVert\frak{\frac{s_1}{2}}u_{\beta_n}\rVert^2 + \frac{\alpha}{2} \lVert u_{\beta_n} + v_{\beta_n} - g\rVert^2 \\
	&\hspace{1cm}+ \frac{\beta_n}{2}  \lVert R_{\frac{s_2}{2}}(v_{\beta_n}) \rVert ^2
	\end{align*}
	for all $ n \in \N$.\\
	Because of the weak lower semicontinuity we get
	\begin{align*}
	&\hspace{1cm}\frac{1}{2} \lVert\frak{\frac{s_1}{2}}u_0 \rVert^2 +  \frac{\alpha}{2}  \lVert u_0 - g \rVert ^2 \\
	& \leq \liminf_{n \rightarrow \infty} \frac{1}{2} \lVert\frak{\frac{s_1}{2}}u_{\beta_n}\rVert^2 +  \frac{\alpha}{2}  \lVert u_{\beta_n} + v_{\beta_n} - g \rVert ^2 \\
	&\hspace{1cm}+ \frac{\beta_n}{2}  \lVert R_{\frac{s_2}{2}}(v_{\beta_n}) \rVert ^2.
	\end{align*}
	This shows that $u_0$ is minimizer of (\ref{funcbartels}).\qed
\end{proof}
\section{Image decomposition and numerical experiments}
	
	We first show an error estimate between the continous and discrete solution for fixed, but arbitrary parameters.
	\begin{lemma}
		Let $s_1,\alpha,s_2,\beta$ be fixed, but arbitrarily 
		chosen. The associated pair $(\textbf{u} = \mathcal{S}_1(s_1,\alpha,s_2,\beta), \\ \textbf{v} = \mathcal{S}_2(s_1,\alpha,s_2,\beta)$ is a minimizer of (\ref{thirdfunc}). The pair  $(\textbf{u}_n = \mathcal{S}_{1,n}(s_1,\alpha,s_2,\beta), \textbf{v}_n = \mathcal{S}_{2,n}(s_1,\alpha,s_2,\beta))$ is the solution of the associated discrete problem in the trigonometric space $\mathcal{T}_n$. \\
		Then we obtain the error estimate
		\begin{align*}
		&\hspace{0.5cm}|\textbf{u}-\textbf{u}_n|^2_{s_1} + \beta |\textbf{v}_n - \textbf{v}|^2_{s_2} + \frac{\alpha}{2}\lVert \textbf{u}_n + \textbf{v}_n - \textbf{u} - \textbf{v} \rVert_{\ell^2} ^2 \\
		&\hspace{0.5cm}\leq \frac{\alpha}{2} \lVert \widehat{\textbf{G}} - \widehat{\textbf{G}}_n \rVert_{\ell^2}^2.
		\end{align*}
	\end{lemma}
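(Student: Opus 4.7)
The plan is to mirror the argument of Lemma \ref{stabi} for the coupled two-component system, using the first-order optimality conditions derived in Theorem \ref{existence} both continuously and on the trigonometric subspace $\mathcal{T}_n$. First I would write the Euler--Lagrange equations componentwise in Fourier coefficients for $(\textbf{u},\textbf{v})$ driven by $g$,
\begin{align*}
|k|^{2s_1}\hat{u}_k + \alpha(\hat{u}_k + \hat{v}_k - \hat{g}_k) &= 0,\\
\alpha(\hat{u}_k + \hat{v}_k - \hat{g}_k) + \beta|k|^{2s_2}\hat{v}_k &= 0,
\end{align*}
for $k\in\Z^d\setminus\{0\}$, and the analogous relations for $(\textbf{u}_n,\textbf{v}_n)$ driven by $g_n$ on the indices $k\in\Z^d_n$. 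Subtracting and writing $\delta \textbf{u}:=\textbf{u}-\textbf{u}_n$, $\delta \textbf{v}:=\textbf{v}-\textbf{v}_n$, $\delta \widehat{\textbf{G}}:=\widehat{\textbf{G}}-\widehat{\textbf{G}}_n$ yields a pair of equations of the same form with right-hand side involving $\delta \widehat{\textbf{G}}$.

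Next I would test the first difference equation against $\overline{\delta \hat{u}_k}$ and the second against $\overline{\delta \hat{v}_k}$, and sum over $k$. Since $\sum_k|k|^{2s_1}|\delta\hat{u}_k|^2=|\delta\textbf{u}|_{s_1}^2$ and analogously $\sum_k|k|^{2s_2}|\delta\hat{v}_k|^2=|\delta\textbf{v}|_{s_2}^2$ (this is where the Riesz-potential interpretation for $s_2\le 0$ enters, exactly as in Definition of the $|\cdot|_s$ norm), adding the two tested equations produces
\begin{align*}
|\delta\textbf{u}|_{s_1}^2 + \beta|\delta\textbf{v}|_{s_2}^2 + \alpha\lVert \delta\textbf{u}+\delta\textbf{v}\rVert_{\ell^2}^2 = \alpha\,\mathrm{Re}\,(\delta\widehat{\textbf{G}},\,\delta\textbf{u}+\delta\textbf{v})_{\ell^2},
\end{align*}
because the two $\alpha$-coupling terms combine into $\alpha(\delta\textbf{u}+\delta\textbf{v}-\delta\widehat{\textbf{G}},\delta\textbf{u}+\delta\textbf{v})_{\ell^2}$.

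Finally I would apply Cauchy--Schwarz followed by Young's inequality to the right-hand side,
\begin{align*}
\alpha\,\mathrm{Re}\,(\delta\widehat{\textbf{G}},\delta\textbf{u}+\delta\textbf{v})_{\ell^2} \le \frac{\alpha}{2}\lVert\delta\widehat{\textbf{G}}\rVert_{\ell^2}^2 + \frac{\alpha}{2}\lVert\delta\textbf{u}+\delta\textbf{v}\rVert_{\ell^2}^2,
\end{align*}
and absorb the $\tfrac{\alpha}{2}\lVert\delta\textbf{u}+\delta\textbf{v}\rVert_{\ell^2}^2$ term into the left-hand side to obtain exactly the claimed inequality. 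The main technical point to be careful about is the correct treatment of indices outside $\Z^d_n$ when comparing the continuous and discrete Fourier expansions: for $k\notin\Z^d_n$ one has $\hat{u}_{n,k}=\hat{v}_{n,k}=\hat{g}_{n,k}=0$ by definition of $\mathcal{S}_{1,n},\mathcal{S}_{2,n}$ and of $\widehat{\textbf{G}}_n$, so that the two Euler--Lagrange identities can be subtracted consistently on the full lattice $\Z^d\setminus\{0\}$; this is the only place where some attentiveness is required, while the remainder is a direct adaptation of the single-equation stability proof.
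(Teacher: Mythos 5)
Your proposal is correct and follows essentially the same route as the paper: both use the continuous and discrete Euler--Lagrange equations, test the differences against $\textbf{u}-\textbf{u}_n$ and $\textbf{v}-\textbf{v}_n$, add them so the coupling terms combine into $\alpha(\delta\textbf{u}+\delta\textbf{v}-\delta\widehat{\textbf{G}},\delta\textbf{u}+\delta\textbf{v})_{\ell^2}$, and finish with Cauchy--Schwarz and Young to absorb $\tfrac{\alpha}{2}\lVert\delta\textbf{u}+\delta\textbf{v}\rVert_{\ell^2}^2$ into the left-hand side. Writing the argument componentwise in Fourier coefficients and noting that the discrete coefficients vanish outside $\Z^d_n$ is just a more explicit rendering of the paper's operator-level computation, so no substantive difference remains.
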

	\begin{proof}
		The solution pair $(\textbf{u},\textbf{v})$ fulfills the Euler-Lagrange equation
		\begin{align*}
		&\frak{s_1} \textbf{u} + \alpha ( \textbf{u} + \textbf{v}  - \widehat{\textbf{G}}) =  0  \text{ in $\ell^2(\Z^d)$}\\
		\text{and}~~~&\\
		&\alpha(\textbf{u} + \textbf{v} -\widehat{\textbf{G}} ) + \beta R_{s_2}(\textbf{v}) = 0  \text{ in $\ell^2(\Z^d)$}.\\
		\end{align*}
		We obtain an analogous Euler-Lagrange equation for the solution pair $(\textbf{u}_n, \textbf{v}_n)$.
		We have
		\begin{align*}
		&\hspace{1cm}|\textbf{u}-\textbf{u}_n|_{s_1} = \\
		&\hspace{1cm}- \alpha(\textbf{u}_n + \textbf{v}_n - \hat{\textbf{g}}_n - \textbf{u} - \textbf{v} + \widehat{\textbf{G}}, \textbf{u}_n - \textbf{u})_{\ell^2}
		\end{align*}
		and 
		\begin{align*}
		&\hspace{1cm}\beta |\textbf{v}_n - \textbf{v}|_{s_2} = \\
		&\hspace{1cm}- \alpha (\textbf{u}_n + \textbf{v}_n - \widehat{\textbf{G}}_n - \textbf{u} - \textbf{v} + \widehat{\textbf{G}}, \textbf{v}_n -\textbf{v})_{\ell^2}.
		\end{align*}
		Using the Cauchy-Schwarz and Young \\
		inequality yields
		\begin{align*}
		&|\textbf{u}-\textbf{u}_n|_{s_1} + \beta |\textbf{v}_n - \textbf{v}|_{s_2} = \\
		&\alpha (-\textbf{u}_n - \textbf{v}_n + \widehat{\textbf{G}}_n + \textbf{u} + \textbf{v} - \widehat{\textbf{G}}, \textbf{u}_n + \textbf{v}_n - \textbf{u} -\textbf{v})_{\ell^2}\\
		& = - \alpha \lVert \textbf{u}_n + \textbf{v}_n - \textbf{u} - \textbf{v}\rVert^2_{\ell^2}+ \\
		&~~~~ \alpha (\widehat{\textbf{G}}_n - \widehat{\textbf{G}}, \textbf{u}_n + \textbf{v}_n - \textbf{u} - \textbf{v} )_{\ell^2} \\
		& \leq \frac{-\alpha}{2} \lVert \textbf{u}_n + \textbf{v}_n - \textbf{u} - \textbf{v} \rVert^2_{\ell^2} + \frac{\alpha}{2} \lVert  \widehat{\textbf{G}} - \widehat{\textbf{G}}_n \rVert^2_{\ell^2}.\\
		\end{align*}
		This implies the assertion.\qed
	\end{proof}
	\subsection{Numerical experiments}
	For the numerical experiments we consider the optimization problem
	\begin{align}
	&j_n(s_1,\alpha, s_2, \beta) = \frac{1}{2} \lVert \mathcal{S}_{1,n}(s_1,\alpha,s_2,\beta)  - \hat{\textbf{U}}^n_d \rVert^2_{l^2}             \nonumber \\
	&\hspace{1cm}+ \varphi(s_1,\alpha,s_2, \beta) \label{imatwo}\\
	&\hspace{1cm}\text{s.t. } (s_1,\alpha,s_2,\beta) \in W \nonumber.
	\end{align}
	with the discrete solution operator $\mathcal{S}_{1,n}$. 
	For the convex set $W$ we choose 
	\begin{align*}
	\hspace{1cm} [0,0.5] \times[0.01,10^4] \times [-1,0] \times [0,10^5]
	\end{align*}
	and as the strong convex function $\varphi$
	\begin{align*}
	&\hspace{1cm}\varphi(s_1,\alpha,s_2,\beta) = \\
	&\hspace{1cm}\frac{1}{s_1(0.5-s_1)} + \frac{1}{(\alpha-0.01)(10^4-\alpha)}\\
	& \hspace{1cm}+ \frac{1}{-s_2(s_2+1)} + \frac{1}{\beta(10^5-\beta)}.
	\end{align*}
	The scaling factor of the function $\varphi$ is $3\cdot10^{-7}$.\\
	The noise is normal distributed with mean value $\mu = 0$ and standard deviation $\sigma = 0.15$. In all numerical experiments we see a better reconstruction of the original image in comparison to the fractional model (\ref{func1red}), see for example Figure \ref{labelpferd}.

	\begin{figure}
		\fboxsep=2mm
		\centering
		\begin{subfigure}{0.22\textwidth}
			\centering
			\mbox{
				\includegraphics[width=0.9\linewidth]{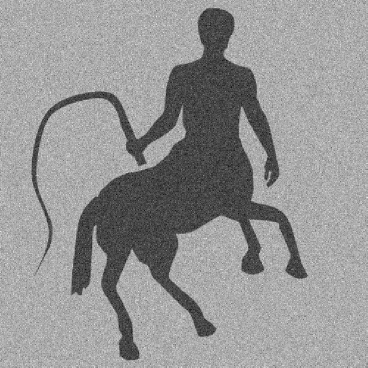}}
			\subcaption{noisy image  $g$\\~}
		\end{subfigure}
		\vspace{0.4cm}
		\begin{subfigure}{0.22\textwidth}
			\centering
			\mbox{
				\includegraphics[width=0.9\linewidth]{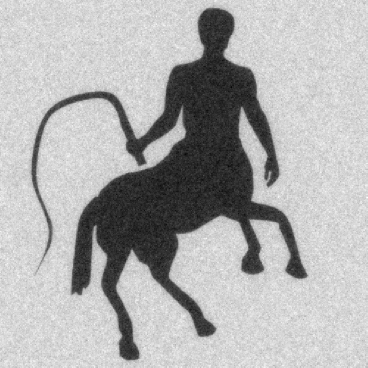}}
			\subcaption{Fractional-Laplacian model (\ref{func1red}); SSIM: 0.356}
		\end{subfigure}
		\begin{subfigure}{0.22\textwidth}
			\centering
			\mbox{
				\includegraphics[width=0.9\linewidth]{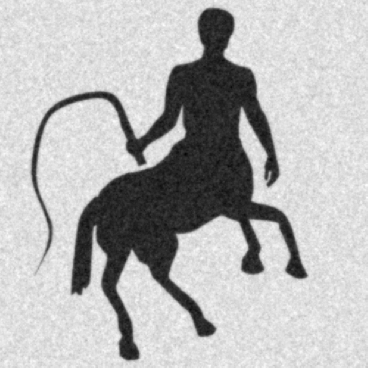}}
			\subcaption{component $u$ of (\ref{imatwo}); SSIM: 0.581}
		\end{subfigure}
		\begin{subfigure}{0.22\textwidth}
			\centering
			\mbox{
				\includegraphics[width=0.9\linewidth]{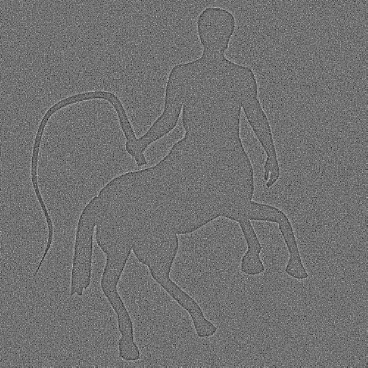}}
			\subcaption{component $v$ of (\ref{imatwo})\newline}
		\end{subfigure}
		\caption{Decompositon of the image "kentaur"{} with optimal parameters $\bar{s_1} = 0.172,~\bar{\alpha} = 9999.7,~\bar{s_2}  = -0.926$ and $\bar{\beta} = 10410$. We see a significantly improvement of the SSIM-value.}
		\label{labelpferd}
	\end{figure}
	\subsection{Comparison with OSV model }
	To compare our model we choose a modification of the ROF model, as shown in \cite{OshSolVes03}. \\There the $L^2$-norm is replaced by the weaker  $H^{-1}$-norm, such that finer details are better \\reconstructed. Therefore, we assume that $g-u = \dive(\vec{v})$ with $\vec{v} \in \ltorus^2$. This yields a unique hodge-decomposition 
	\begin{align*}
	\hspace{1cm} v = \nabla P + \vec{Q}
	\end{align*} 
	with $P \in H^1(\torus)$ and a divergence-free vector field $\vec{Q}$. Using $g- u = \dive(\vec{v}) = \Delta P$ we have $P = \Delta^{-1}(g-u)$. Combining all arguments has as result the convex minimization problem
	\begin{align}
	\inf_{u} E(u) = \int_{\torus}^{} |\nabla u| + \alpha \int_{\torus}^{} |\nabla(\Delta^{-1}(g-u))|^2.\label{seg}
	\end{align}
	which has a solution.\\
	For the numerical implementation we refer to \cite{Pey11}.
	In Figure \ref{two2} and \ref{two3} we compare the Riesz model (\ref{imatwo}) and the OSV model (\ref{seg}). The Riesz model can separate the textural component $v$ much better than the OSV model.
	
	\begin{figure}
		\fboxsep=2mm
		\centering
		\begin{subfigure}{0.22\textwidth}
			\centering
			\mbox{
				\includegraphics[width=0.9\linewidth]{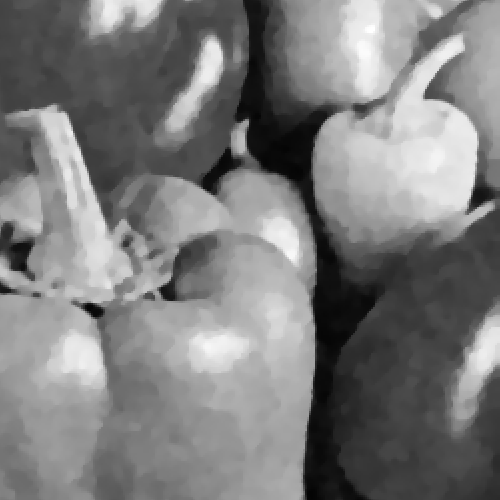}}
			\subcaption{component $u$ of (\ref{seg}); SSIM: 0.798}
		\end{subfigure}
		\vspace{0.4cm}
		\begin{subfigure}{0.22\textwidth}
			\centering
			\mbox{
				\includegraphics[width=0.9\linewidth]{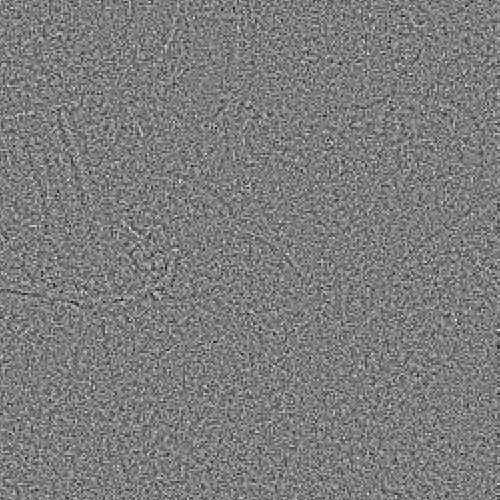}}
			\subcaption{component $g-u$ of (\ref{seg})}
		\end{subfigure}
		\begin{subfigure}{0.22\textwidth}
			\centering
			\mbox{
				\includegraphics[width=0.9\linewidth]{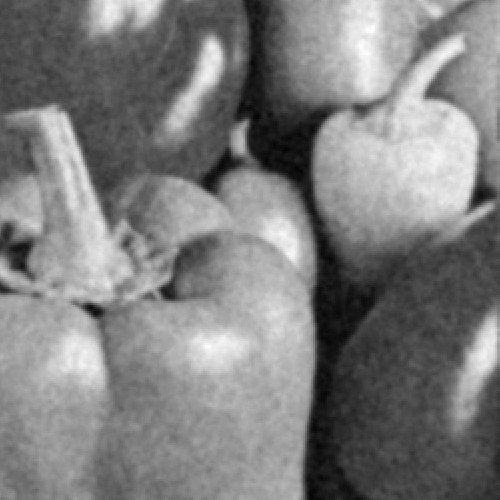}}
			\subcaption{component $u$ of (\ref{imatwo}); SSIM: 0.716}
		\end{subfigure}
		\begin{subfigure}{0.22\textwidth}
			\centering
			\mbox{
				\includegraphics[width=0.9\linewidth]{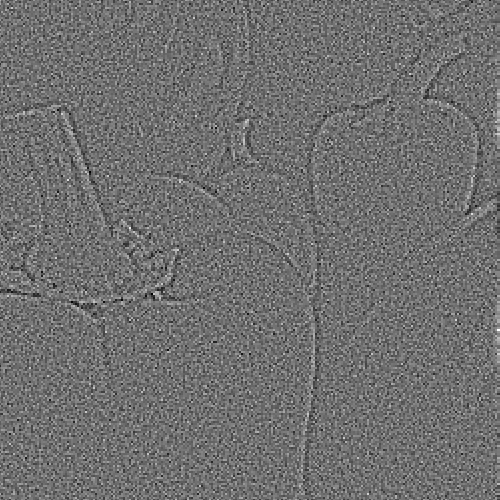}}
			\subcaption{component $v$ of (\ref{imatwo})\newline}
		\end{subfigure}
		\caption{Detail of the image "pepper"{} ($\sigma = 0.1$). The model (\ref{seg}) can not sufficiently distinguish between noise and component $v$.}
		\label{two2}
	\end{figure}
\begin{figure}
	\fboxsep=2mm
	\centering
	\begin{subfigure}{0.22\textwidth}
		\centering
		\mbox{
			\includegraphics[width=0.9\linewidth]{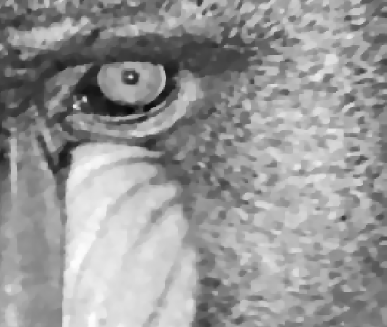}}
		\subcaption{component $u$ of (\ref{seg}); SSIM: 0.553}
	\end{subfigure}
	\vspace{0.4cm}
	\begin{subfigure}{0.22\textwidth}
		\centering
		\mbox{
			\includegraphics[width=0.9\linewidth]{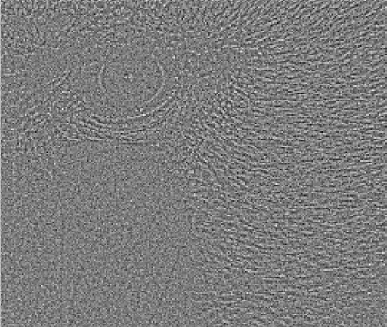}}
		\subcaption{component $g-u$ of (\ref{seg})}
	\end{subfigure}
	\begin{subfigure}{0.22\textwidth}
		\centering
		\mbox{
			\includegraphics[width=0.9\linewidth]{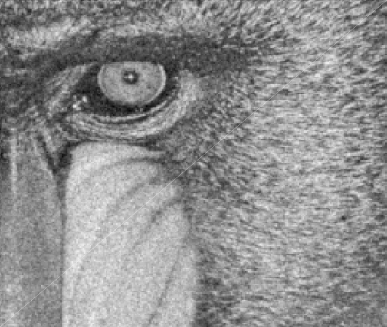}}
		\subcaption{component $u$ of (\ref{imatwo}); SSIM: 0.691}
	\end{subfigure}
	\begin{subfigure}{0.22\textwidth}
		\centering
		\mbox{
			\includegraphics[width=0.9\linewidth]{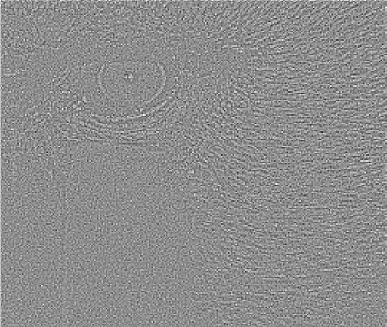}}
		\subcaption{component $v$ of (\ref{imatwo})\newline}
	\end{subfigure}
	\caption{Detail of the image "Baboon"{} ($\sigma = 0.1$).}
	\label{two3}
\end{figure}
\section{Conclusion and outlook}
 In this work we illustrated the possibilities of fractional differential operators in image regularization and decomposition. Working with the Fourier transform we can easily define solution operators. In the case of image regularization the analysis of the solution operator $\mathcal{S}$ allows us to define and analyse a bilevel optimization problem to determine the optimal values for the parameter $s$ and $\alpha$. In contrast to Deep Learning approaches we obtain error estimates and are able to derive an analytical understanding of the problem. The theoretical considerations correlate with the numerical experiments. As an advantage in comparison to the ROF model we can automatically determine the optimal parameters, but we have slightly worse SSIM-values. We point out, that our model has a lower runtime. An open question for further investigations is the choice of a suitable metric to compare the denoised image $u$ and the original image $u_d$. The $L^2$-norm only considers the absolut difference between two images and no structural similarity between them. In the case of image decomposition we introduced a new functional. We proved existence and uniqueness of the solution regarding this model. Moreover, our model can approximate other image decompositon models in the limiting case when $\alpha$ or $\beta$ tends to infinity. The numerical experiments show better results than the fractional Laplacian model. The comparison with the OSV model yields comparable results. Furthermore, we point out, that our model can reconstruct the image component $v$ better than the OSV model.
 The extension to color images is a point of future research. An experimental setup in case of fractional image denoising indicates that the use of quaternionic Fourier transform seems to be the right choice.
\bibliographystyle{spmpsci}
\bibliography{paper_bib}
\end{document}